\documentclass[12pt]{amsart}
\usepackage{amsmath}
\usepackage{amsthm}
\usepackage{amsfonts}
\usepackage{comment}
\usepackage{amssymb,amsrefs}
\usepackage{comment}
\usepackage{hyperref}
\usepackage{mathtools}
\usepackage{bm}
\usepackage[margin=1in]{geometry}

\usepackage{mathrsfs}
\usepackage{enumerate}

\usepackage{todonotes}

\newtheorem{theorem}{Theorem}[section]

\newtheorem{lemma}[theorem]{Lemma}

\newtheorem{conjecture}[theorem]{Conjecture}

\theoremstyle{definition}

\newtheorem*{theorem*}{Theorem}
\newtheorem*{proposition*}{Proposition}
\newtheorem*{lemma*}{Lemma}

\theoremstyle{remark}
\newtheorem*{remark}{Remark}

\numberwithin{equation}{section}

\newcommand{\CC}{\mathbb C}
\newcommand{\FF}{\mathbb F}
\newcommand{\QQ}{\mathbb Q}
\newcommand{\OO}{\mathcal O}

\newcommand{\RR}{\mathbb R}

\newcommand{\on}{\operatorname}

\makeatletter
\newcommand*{\defeq}{\mathrel{\rlap{%
                     \raisebox{0.3ex}{$\m@th\cdot$}}%
                     \raisebox{-0.3ex}{$\m@th\cdot$}}%
                     =}
\makeatother

\newcommand{\pp}{\mathfrak p}

\newcommand\half{\frac12}

\renewcommand{\Re}{\operatorname{Re}}

\DeclareMathOperator{\Sym}{Sym}
\newcommand{\Lij}{\Lambda_{i \otimes j}}

\newcommand{\Lt}{\widetilde{\Lambda}}
\newcommand{\Nij}{N_{i \otimes j}}
\newcommand{\PSY}[2]{\Sym^{#1} E_1 \otimes \Sym^{#2} E_2}

\newcommand{\zzt}{\left\lvert \frac{\zeta'}{\zeta}(2) \right\rvert}
\newcommand{\gij}{\gamma_{i \otimes j}}
\newcommand{\dgij}{\frac{\gij'}{\gij}}
\newcommand{\dgk}{\frac{\gamma_{k}'}{\gamma_{k}}}
\newcommand{\dGam}{\frac{\Gamma'}{\Gamma}}
\newcommand{\ddG}{\left( \frac{\Gamma'}{\Gamma} \right)'}
\newcommand{\dLL}{\frac{L'_{i\otimes j}}{L_{i\otimes j}}}
\newcommand{\nc}{\newcommand}
\nc{\SWAG}{W}
\nc{\zt}{Z^{\on{triv}}}
\nc{\zn}{Z^{\on{non}}}

\allowdisplaybreaks[1]

\author[Evan Chen]{Evan Chen}\address
{ Department of Mathematics, Massachusetts Institute of Technology, 77 Massachusetts Avenue, Cambridge, MA 02139}
\email{\href{mailto:evanchen@mit.edu}{{\tt evanchen@mit.edu}}}

\author[Peter S. Park]{Peter S. Park}\address{ Department of Mathematics, Princeton University, Fine Hall, Washington Road, Princeton, NJ 08544}
\email{\href{mailto:pspark@math.princeton.edu}{{\tt pspark@math.princeton.edu}}}

\author[Ashvin A. Swaminathan]{Ashvin A. Swaminathan}\address{ Department of Mathematics, Harvard College, \\
1 Oxford Street, Cambridge, MA 02138}
\email{\href{mailto:aaswaminathan@college.harvard.edu}{{\tt aaswaminathan@college.harvard.edu}}}

\begin{document}
 
\title[Elliptic Curve Variants of Least Quadratic Nonresidue and Linnik's Theorem]{Elliptic Curve Variants of the Least Quadratic Nonresidue Problem and Linnik's Theorem}
\date{\today}
\keywords{Elliptic curves, least quadratic nonresidue, Linnik's Theorem, trace of Frobenius, Sato-Tate Conjecture, Riemann Hypothesis}
\subjclass[2010]{11G05,11R44,14H52,11N05,11M41}

\begin{abstract}
Let $E_1$ and $E_2$ be $\overline{\mathbb{Q}}$-nonisogenous, semistable elliptic curves over $\mathbb{Q}$,
having respective conductors $N_{E_1}$ and $N_{E_2}$ and both without complex multiplication.
For each prime $p$, denote by $a_{E_i}(p) := p+1-\#E_i(\mathbb{F}_p)$ the trace of Frobenius.
Under the assumption of the Generalized Riemann Hypothesis (GRH) for the convolved symmetric power $L$-functions
$L(s, \mathrm{Sym}^i E_1\otimes\mathrm{Sym}^j E_2)$ where $i,j\in\{0,1,2\}$,
we prove an explicit result that can be stated succinctly as follows: there exists a prime $p\nmid N_{E_1}N_{E_2}$ such that $a_{E_1}(p)a_{E_2}(p)<0$ and
	\[ p < \big( (32+o(1))\cdot \log N_{E_1} N_{E_2}\big)^2. \]
This improves and makes explicit a result of Bucur and Kedlaya.

Now, if $I\subset[-1,1]$ is a subinterval with Sato-Tate measure $\mu$
and if the symmetric power $L$-functions $L(s, \mathrm{Sym}^k E_1)$ are functorial and satisfy GRH for all $k \le 8/\mu$, we employ similar techniques to prove an explicit result that can be stated succinctly as follows: there exists a prime $p\nmid N_{E_1}$ such that $a_{E_1}(p)/(2\sqrt{p})\in I$ and
	\[ p < \left((21+o(1)) \cdot \mu^{-2}\log (N_{E_1}/\mu)\right)^2. \]
\end{abstract}
\maketitle
\vspace*{-0.2cm}


\section{Introduction}\label{intro}


Let $E$ be an elliptic curve over $\QQ$, and for each prime $p$, let $\#E(\FF_{p})$ denote the number of rational points of $E$ over the finite field $\FF_{p}$ of $p$ elements. Let $a_E(p) =p + 1 - \# E(\FF_p)$ denote the trace of Frobenius, which, by the Hasse bound, satisfies the following inequality:
\begin{equation*}
	\left|a_E(p)\right| < 2 \sqrt {p}.
\end{equation*}
From the above bound, one readily observes that for each prime $p$, there exists a unique angle $\theta_{p} \in [0, \pi]$ such that $a_E(p) = 2 \sqrt {p} \cos \theta_{p}$. It is natural to ask how the angles $\theta_p$ are distributed for a fixed elliptic curve as $p$ ranges over the primes. The Sato-Tate conjecture, which was recently proven by Barnet-Lamb, Harris, Geraghty, and Taylor in~\cite{tater}, asserts that when $E$ does not have complex multiplication (CM), the angles $\theta_p$ are equidistributed with respect to the Sato-Tate measure $\mu_{\on{ST}}$ defined by $d\mu_{\on{ST}} \defeq \frac{2}{\pi} \sin^2 \theta \; d\theta$. The conjecture is stated formally as follows:
\begin{theorem*}
	[Sato-Tate Conjecture]
	Fix a non-CM elliptic curve $E/\QQ$, and let $I \subset [0,\pi]$ be a subinterval. Then, we have that
    \begin{equation*}
        \lim_{x \to \infty} \frac{\#\{p\leq x:\theta_p\in I\}}{\#\{p \leq x\}} 
        =         	\mu_{\on{ST}}(I) \defeq \displaystyle\int_I\frac{2}{\pi}\sin^2\theta~d\theta.
    \end{equation*}
\end{theorem*}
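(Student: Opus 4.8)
The plan is to deduce the limiting distribution from the analytic behaviour of the symmetric power $L$-functions, following the strategy that goes back to Serre. Note that $\mu_{\on{ST}}$ is precisely the pushforward to $[0,\pi]$ of Haar probability measure on the space of conjugacy classes of $\mathrm{SU}(2)$ under the map sending a class with eigenvalues $e^{\pm i\theta}$ to $\theta$, and that the characters of the irreducible representations $\Sym^n$ of $\mathrm{SU}(2)$ -- namely $\theta \mapsto U_n(\cos\theta) = \sin((n+1)\theta)/\sin\theta$, with $U_n$ the $n$-th Chebyshev polynomial of the second kind -- form an orthonormal basis of $L^2([0,\pi],\mu_{\on{ST}})$. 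By the Weyl equidistribution criterion it therefore suffices to establish, for each fixed $n \ge 1$, the cancellation
\[ \frac{1}{\#\{p \le x\}} \sum_{p \le x} U_n(\cos\theta_p) \longrightarrow 0 \qquad \text{as } x \to \infty. \]

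First I would package these character sums into Dirichlet series. Writing $\alpha_p = \sqrt{p}\,e^{i\theta_p}$ and $\beta_p = \sqrt{p}\,e^{-i\theta_p}$ for the Frobenius eigenvalues at a good prime $p$, the $n$-th symmetric power $L$-function is
\[ L(s, \Sym^n E) = \prod_p \prod_{j=0}^{n}\bigl(1 - \alpha_p^{\,j}\beta_p^{\,n-j}\,p^{-s}\bigr)^{-1}, \]
and one checks that the coefficient of $p^{-s}$ in the Euler factor at a good prime $p$ equals $p^{n/2}\,U_n(\cos\theta_p)$. Taking $-\log$ and comparing the resulting partial sums with the pole/zero structure -- the factor $L(s,\Sym^0 E) = \zeta(s)$ supplying the pole at $s=1$ that produces the denominator $\#\{p\le x\}$ -- a classical Tauberian argument (Wiener--Ikehara, or a contour shift via the Perron formula) reduces the desired asymptotic to the following analytic input: for every $m$ with $1 \le m \le n$, the function $L(s,\Sym^m E)$ extends holomorphically and without zeros to the closed half-plane $\Re(s) \ge 1$. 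Only this edge-of-strip information is needed; no GRH is required here.

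The substantive point is establishing that continuation and non-vanishing, for which I would invoke potential automorphy. Using the generalized Taylor--Wiles--Kisin automorphy lifting theorems together with a Moret-Bailly / Harris--Shepherd-Barron--Taylor argument -- in which the geometry of a suitable family of Calabi--Yau (Dwork) hypersurfaces, whose cohomology realizes the symmetric powers, is combined with a Hilbert modular form to arrange a residual congruence over an auxiliary totally real field $F$ -- one shows that $\Sym^n$ of the $\ell$-adic Galois representation attached to $E$ becomes automorphic after base change to $F$. The expected analytic properties of the corresponding $L$-function over $F$ then follow from the theory of automorphic $L$-functions; Brauer's induction theorem descends meromorphic continuation and a functional equation to $\QQ$ with control of the poles; and non-vanishing on the line $\Re(s)=1$ follows from Rankin--Selberg theory applied to the self-dual automorphic representation in question, via the standard pole-at-$s=1$-plus-positivity argument of Jacquet--Shalika.

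The hard part will be the automorphy lifting step itself: verifying the hypotheses of the modularity lifting theorems -- the adequacy, or big-image, conditions on the residual representations $\overline{\rho}_{\Sym^n E,\ell}$, the handling of small residue characteristics and of non-minimal ramification, and the potential-automorphy bookkeeping made uniform in $n$. This is exactly the achievement of Barnet-Lamb, Geraghty, Harris, and Taylor~\cite{tater}, which we take as a black box; the reduction sketched in the first two paragraphs is the classical part of the argument.
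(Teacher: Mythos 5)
The paper does not actually prove this statement: it is the Sato--Tate conjecture, stated as a background theorem and attributed entirely to Barnet-Lamb, Geraghty, Harris, and Taylor via the citation to \cite{tater}. Your sketch is a correct outline of how that theorem is proved in the cited literature---Weyl's criterion and Peter--Weyl orthogonality of the $U_n(\cos\theta)$ reduce the problem to holomorphy and non-vanishing of $L(s,\Sym^n E)$ on $\Re(s)\ge 1$ (Serre's reduction), which in turn rests on potential automorphy via Taylor--Wiles--Kisin lifting, the Dwork family and Moret-Bailly / Harris--Shepherd-Barron--Taylor, Brauer induction for descent, and Jacquet--Shalika non-vanishing at the edge---and you ultimately invoke the same black box the paper does, so the two ``proofs'' agree in substance even though the paper contributes no argument of its own here.
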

As a consequence of the Sato-Tate Conjecture, it is natural to consider elliptic curve variants of well-known questions on the distribution of primes; in this paper, we address two such problems. First, in analogy with the least quadratic nonresidue problem, we obtain an explicit, conditional bound on the least prime at which two non-CM elliptic curves have Frobenius traces of opposite signs. Then, in analogy with Linnik's Theorem, we obtain an explicit, conditional bound on the least prime $p$ for which the angle $\theta_p$ associated to a single elliptic curve lies in a specified interval. The rest of this section is devoted to introducing these two questions and providing detailed statements of our results for each question.

\subsection{Frobenius Traces of Opposite Sign}
We begin by recalling that two elliptic curves $E_1$ and $E_2$ over $\QQ$ are $\QQ$-isogenous if and only if their traces of Frobenius satisfy $a_{E_1}(p) = a_{E_2}(p)$ for all but finitely many primes $p$. In light of this fact, it is natural to ask the following question: given two nonisogenous elliptic curves, can one obtain a tight upper bound on the least prime $p$ for which $a_{E_1}(p) \neq a_{E_2}(p)$?\footnote{See the paper~\cite{smores} and the book~\cite{intro}*{Chapter 11.3} by Cojocaru and Murty for more on such Linnik-type problems for elliptic curves.} This question was first addressed by Serre (see~\cite{serre}); assuming the Generalized Riemann Hypothesis (GRH) for certain Artin $L$-functions, he used the $\ell$-adic properties of elliptic curves to show that the least such prime is $\ll (\log D)^2$, where $D$ denotes the maximum of the conductors of $E_1$ and $E_2$. Subsequently, Duke and Kowalski proved the following unconditional analogue of Serre's result:
\begin{theorem*}[Duke and Kowalski,~\cite{duke}]
For $D >0$, $\alpha >1$, let $P(D,\alpha)$ denote the maximal number of isogeny classes of elliptic curves $E/\QQ$ such that $E$ is non-CM and has squarefree conductor $\leq D$, and such that for all primes $p \leq (\log D)^\alpha$, they have the same trace of Frobenius at $p$. Then for any $\varepsilon > 0$, we have that $P(D, \alpha) \ll D^{\varepsilon+10/\alpha}$.
\end{theorem*}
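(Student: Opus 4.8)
The plan is to reduce $P(D,\alpha)$ to a mean-value (large-sieve) estimate for a suitable family of modular forms, and then to exploit the fact that the hypothesis constrains Hecke eigenvalues only at $x$-smooth integers, where $x := (\log D)^\alpha$. First I would invoke modularity: the isogeny classes being counted correspond to pairwise distinct weight-$2$ newforms $f_1,\dots,f_P$ of squarefree level $N_i\le D$ and trivial nebentypus, normalized so that $\lambda_i(p):=a_{E_i}(p)/\sqrt p$ is the $p$-th Hecke eigenvalue. Because $a_{E_i}(p)=a_{E_j}(p)$ for all primes $p\le x$, and because the level is squarefree (so that each $\lambda_i(p^k)$ is a fixed polynomial in $\lambda_i(p)$), Hecke multiplicativity yields $\lambda_i(d)=\lambda_j(d)=:\lambda(d)$ for every $x$-smooth $d\ge 1$ and all $i,j$; one first disposes of the finitely many primes dividing some but not all of the $N_i$, a routine if slightly delicate bookkeeping step. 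Thus it suffices to bound the number $P$ of weight-$2$, squarefree-level newforms of level $\le D$ whose Hecke eigenvalues agree at every $x$-smooth integer.

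The engine will be a mean-value inequality for this family. Using the Petersson trace formula on $S_2(\Gamma_0(q))$ together with Deligne's bound $|\lambda_f(d)|\ll_\varepsilon d^\varepsilon$ and Weil's bound for Kloosterman sums --- or, in the generality of \cite{duke}, a softer mean-value estimate for automorphic $L$-functions --- one obtains, for all $M\ge1$ and all complex coefficients $(b_d)_{d\le M}$, an inequality of the shape
\[
	\sum_{q\le D}\ \sum_{f}\ \Big|\sum_{d\le M}b_d\,\lambda_f(d)\Big|^2 \ \ll_\varepsilon\ (DM)^\varepsilon\,\big(D^{A}+M\big)\sum_{d\le M}|b_d|^2 ,
\]
where $f$ runs over the normalized Hecke newforms of level $q$ and the admissible exponent $A$ --- fixed by the size of the family and the off-diagonal (Kloosterman) terms --- is precisely what controls the numerical constant in the final answer. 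I would then specialize $b_d=\overline{\lambda(d)}$ for $x$-smooth $d\le M$ and $b_d=0$ otherwise, with $M$ a suitable fixed power of $D$. On the support of $(b_d)$ one has $\lambda_{f_i}(d)=\lambda(d)$, so the inner sum equals, for \emph{every} $i$, the single quantity
\[
	T\ :=\ \sum_{\substack{d\le M\\ d\ \text{is}\ x\text{-smooth}}}|\lambda(d)|^2 ,
\]
whence the left-hand side above is $\ge P\cdot T^2$.

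Two complementary estimates then close the argument. On one side, the number of $x$-smooth integers up to $M$ is $M\,\rho(\log M/\log x)\,(1+o(1))$ with $\rho$ the Dickman function, and since $\log x=\alpha\log\log D$ this is $M\cdot D^{-(\log M/\log D)/\alpha+o(1)}$; together with Deligne's bound $|\lambda(d)|\ll_\varepsilon d^\varepsilon$ this bounds $\sum_{d\le M}|b_d|^2=\sum_{d\le M,\ x\text{-sm}}|\lambda(d)|^4$ from above. On the other side --- the crucial point --- one needs a matching lower bound for $T$. Here I would use that $\sum_{d\ge1}\lambda_{f_i}(d)^2 d^{-s}$ equals $\zeta(s)L(s,\Sym^2 f_i)$ up to finitely many Euler factors; since $E_i$ is non-CM, $L(s,\Sym^2 f_i)$ is automorphic (Gelbart--Jacquet), so this product is holomorphic apart from a simple pole at $s=1$ with nonzero residue. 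A Rankin-trick applied to its $x$-smooth sub-Euler-product $\prod_{p\le x}\big(1+\lambda(p)^2p^{-s}+\cdots\big)$ --- with analytic input the classical zero-free region for the degree-three $L(s,\Sym^2 f_i)$ of conductor $\ll D^2$, and the unconditional Rankin--Selberg asymptotic $\sum_{p\le x}\lambda_{f_i}(p)^2\log p\sim x$ --- transfers the pole into a lower bound $T\gg M\,\rho(\log M/\log x)^{1+o(1)}$. Substituting both estimates into $P\cdot T^2\ll_\varepsilon(DM)^\varepsilon(D^A+M)\sum_{d\le M}|b_d|^2$, choosing $M$ comparable to $D^A$ to balance the two terms, and carefully accounting for the exponents (and the deferred ramified-prime bookkeeping) is what leaves a bound of the form $P\ll_\varepsilon D^{\varepsilon+10/\alpha}$.

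The main obstacle is the tension between the two inputs. On one hand one wants the mean-value estimate with the smallest possible exponent $A$ and with $M$ allowed as large as a fixed power of $D$; achieving the former drives one into the delicate off-diagonal (Kloosterman) analysis of the Petersson formula, and the loss incurred there --- or, equivalently, the use of a more robust but weaker mean-value bound valid for general automorphic representations --- is the origin of the explicit constant $10$. On the other hand, the lower bound for $T$ cannot come from positivity alone, which would give only $T\ge\sum_{p\le x}\lambda(p)^2\asymp x$, far too small; one genuinely needs the Tauberian extraction of the pole of $\zeta(s)L(s,\Sym^2 f_i)$ from only the $x$-smooth coefficients, uniformly over the family. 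Making the Dickman factor $\rho(\log M/\log x)$ appear on both sides with matching exponents, so that the net loss is a clean power $D^{-c/\alpha}$, is the heart of the estimate and is what ultimately pins down the constant.
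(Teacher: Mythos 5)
This statement is cited in the paper as a background result attributed to Duke and Kowalski~\cite{duke}; the paper does not prove it, so there is no internal proof to compare your sketch against. Your reconstruction does capture what I understand to be the overall shape of the Duke--Kowalski argument: modularity to pass to weight-$2$ newforms of squarefree level, Hecke multiplicativity to propagate agreement at primes $p\le(\log D)^\alpha$ to agreement at all $x$-smooth integers, a duality/mean-value inequality for the family, and a Rankin--Selberg input (the pole of $\zeta(s)L(s,\Sym^2 f)$) to produce the matching lower bound for the smooth partial sum. Two points to scrutinize if you were to fill this in against the source. First, the mean-value estimate: what Duke and Kowalski actually prove and apply is a mean-value theorem for rather general families of automorphic $L$-functions, phrased in terms of analytic conductors, rather than a Petersson/Kloosterman large sieve for $\operatorname{GL}_2$; the entire numerology producing the exponent $10/\alpha$ lives in the parameters of that estimate together with the choice of $M$ and the Dickman factor, and your sketch, which leaves $A$ unspecified, cannot recover the constant $10$. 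Second, the lower bound on $T$ must be uniform across the family $f_1,\dots,f_P$; invoking a zero-free region for each $L(s,\Sym^2 f_i)$ separately does not by itself give the required uniformity, so the ``Rankin trick on the $x$-smooth Euler product'' step needs more care than your outline suggests. Neither of these is a wrong turn, but both are precisely the places where the real work lies, and neither can be waved through without the specific form of the mean-value lemma in hand.
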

     
Now, consider the following related question: what is the least prime $p$ for which $a_{E_1}(p)$ and $a_{E_2}(p)$ are not only different, but also satisfy $a_{E_1}(p) a_{E_2}(p) < 0$ (i.e.\ have opposite signs)? While studying the $\ell$-adic behavior of elliptic curves (in the manner of Serre~\cite{serre}) seems insufficient for solving this particular problem, we can instead exploit the distribution of angles $\theta_p$ given by the Sato-Tate conjecture to handle the opposite-sign condition. Using this alternative strategy, Bucur and Kedlaya show in~\cite{kedlaya} that if $L(s, \PSY ij)$ has an analytic continuation and functional equation,\footnote{These properties would be a consequence of the Langlands program.} and also satisfies GRH for all $i,j \geq 0$, then the smallest prime $p$ for which $a_{E_1}(p)a_{E_2}(p)<0$ satisfies $p \ll (\log N)^2 (\log \log 2N)^2$, where $N$ denotes the product of the conductors of $E_1$ and $E_2$.
In the first main theorem of this paper, we provide an explicit bound of the form $p \ll (\log N)^2$ on the least prime $p$ for which $a_{E_1}(p)a_{E_2}(p) <0$, under milder conditions than those assumed by Bucur and Kedlaya in~\cite{kedlaya}; indeed, our method only requires GRH for the $L$-functions $L(s, \PSY ij)$ with $i, j \in \{0,1,2\}$. Our result can be stated succinctly as follows; we refer the reader to~\eqref{eq:explicit_main} for the completely explicit version of the theorem.

\begin{theorem}\label{thm:main}
	Let $E_1$ and $E_2$ be $\overline{\QQ}$-nonisogenous, non-CM, semistable elliptic curves over $\QQ$, and let $N$ be the product of their conductors. Assume that GRH holds for the $L$-functions $L(s, \PSY ij)$ with $i, j \in \{0,1,2\}$. Then, there exists a prime $p$ such that $a_{E_1}(p)a_{E_2}(p) < 0$ and
	\[ p < \left( 32\log N + O\big(\sqrt[3]{\log N}\big)\right)^2. \]
\end{theorem}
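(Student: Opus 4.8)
The plan is to detect a prime $p$ with $a_{E_1}(p) a_{E_2}(p) < 0$ by an explicit-formula / positivity argument applied to a carefully chosen test function supported on short Sato--Tate intervals. First I would introduce, for each pair $(i,j)$ with $i,j \in \{0,1,2\}$, the convolved symmetric-power $L$-function $L(s, \Sym^i E_1 \otimes \Sym^j E_2)$; writing its local data in terms of the Frobenius angles $\theta_p^{(1)}, \theta_p^{(2)}$, the Dirichlet coefficients are essentially products $U_i(\cos\theta_p^{(1)}) U_j(\cos\theta_p^{(2)})$ of Chebyshev polynomials of the second kind (the characters of $\mathrm{SU}(2)$). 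The sign condition $a_{E_1}(p) a_{E_2}(p) < 0$ is equivalent to $\cos\theta_p^{(1)} \cos\theta_p^{(2)} < 0$, i.e.\ $(\theta_p^{(1)}, \theta_p^{(2)})$ lying in the ``opposite-sign'' region of $[0,\pi]^2$. The strategy is therefore to build a nonnegative trigonometric polynomial $F(\theta^{(1)}, \theta^{(2)})$, of bidegree at most $(2,2)$ in the Chebyshev expansion, that is supported on (or at least concentrated on) this opposite-sign region, and to show that $\sum_p F(\theta_p^{(1)}, \theta_p^{(2)}) (\log p) w(p)$ is positive for a suitable smooth weight $w$ cut off at $x = (32\log N + O(\log^{1/3}N))^2$ — forcing at least one such prime below $x$.

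The analytic heart is the explicit formula. For each $(i,j)$ I would take the standard Weil/Riemann--von Mangoldt explicit formula for $L(s, \Sym^i E_1\otimes\Sym^j E_2)$ against a smooth, compactly supported test function $\phi$ whose Fourier/Mellin transform is nonnegative, getting
\begin{equation*}
\sum_{p,\,m} (\text{coeff})_{i,j}(p^m)\,(\log p)\,\phi(m\log p) = \widehat{\phi}(0)\cdot(\text{main term}) - \sum_{\rho}\widehat{\phi}(\rho) + (\text{archimedean \& conductor terms}).
\end{equation*}
Under GRH every zero $\rho$ has $\Re\rho = 1/2$, so $|\widehat\phi(\rho)|$ is controlled and the zero-sum contributes an error of size $O(\log(\text{conductor}) \cdot \text{support length})$; the conductor of $L(s,\Sym^i E_1\otimes\Sym^j E_2)$ is bounded by a fixed power of $N_{E_1}N_{E_2}$, which is where the constant $32$ and the factor $(\log N)^2$ ultimately come from. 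Taking the right linear combination $\sum_{i,j} c_{ij}(\cdots)$ realizing the nonnegative ``detector'' polynomial $F$, the prime-power terms with $m\ge 2$ are absorbed into the error (this is why semistability and a little care with ramified primes dividing $N_{E_1}N_{E_2}$ are needed), and the $m=1$ main term is the sum $\sum_{p<x} F(\theta_p^{(1)},\theta_p^{(2)})(\log p)\phi(\log p)$ we want to show is positive. Positivity then reduces to the numerical inequality that the ``mass'' of $F$ coming from the main term of the $(0,0)$ piece (the prime-counting term, $\asymp x^{1/2}$ after the archimedean normalization, thanks to the $\sqrt p$ in the Hasse bound) dominates the GRH error $\asymp (\log N)\sqrt x$ once $\sqrt x > 32\log N$.

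Concretely, the steps in order: (1) fix notation for $\theta_p^{(i)}$ and write the coefficients of each $L(s,\Sym^i E_1\otimes \Sym^j E_2)$ as $U_i U_j$ of the cosines, recording the analytic conductor and gamma factor; (2) choose the smooth weight $\phi_x$ (a Fej\'er-type kernel scaled to support $[0, \log x]$) so that $\widehat{\phi_x}\ge 0$ and $\widehat{\phi_x}(0)\asymp \log x$; (3) write down the explicit formula for each $(i,j)$, invoke GRH to bound the zero-sum by $O(\log N \cdot \log x)$ and bound the archimedean term by $O(\log x)$; (4) optimize the choice of the nonnegative bivariate Chebyshev polynomial $F$ of bidegree $(2,2)$ supported on $\{\cos\theta^{(1)}\cos\theta^{(2)}<0\}$ to maximize the ratio (main mass)/(error coefficient) — this is the combinatorial optimization that pins down the constant $32$; (5) conclude that for $x \ge (32\log N + O(\log^{1/3}N))^2$ the positivity $\sum_{p<x} F(\theta_p^{(1)},\theta_p^{(2)})(\log p)\phi_x(\log p) > 0$ holds, hence some $p<x$ has $a_{E_1}(p)a_{E_2}(p)<0$; the $O(\log^{1/3}N)$ slack comes from balancing the $m\ge 2$ prime-power error (size $\asymp x^{1/3}\log x$ from $m=3$, since $m=2$ terms can be handled exactly by the choice of $F$ or subtracted off) against the main term.

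The main obstacle I anticipate is step (4) together with the prime-power bookkeeping in step (3): one must exhibit an \emph{explicit} nonnegative $F$ of bidegree $(2,2)$ — not merely prove one exists — whose ``opposite-sign'' mass beats the GRH error with the sharp constant, and simultaneously verify that the contributions of $p\mid N_{E_1}N_{E_2}$ and of prime powers $p^m$, $m\ge 2$, do not erode that constant. Getting the clean ``$32$'' (rather than a larger constant) is delicate: it requires the Fej\'er weight to be essentially optimal and the degree-$(2,2)$ detector polynomial to be optimal, and it is precisely the interplay between these two optimizations — compounded by the $\sqrt p$ normalization that makes the honest ``count'' term of size $\sqrt x$ rather than $x$ — that makes the final constant nontrivial to nail down.
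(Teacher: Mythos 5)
Your high-level strategy --- a bidegree-$(2,2)$ Chebyshev detector for the opposite-sign region, combined via the $L$-functions $L(s, \PSY ij)$ with $i,j\in\{0,1,2\}$ and analyzed through the explicit formula under GRH --- is essentially the paper's approach, and you correctly locate the source of the constant $32$ in the combinatorics of that detector. However, two details in your description are off in ways that would derail the proof if taken literally.

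First, the detector cannot be a \emph{nonnegative} trigonometric polynomial ``supported on'' the opposite-sign region: a polynomial of fixed bidegree that is nonnegative everywhere and vanishes off an open set would be identically zero. What is actually needed is a polynomial that is \emph{nonpositive} outside the region and positive inside, so that positivity of the weighted sum forces a prime with $a_{E_1}(p)a_{E_2}(p)<0$. The paper achieves this with the product
\[ f_1(\cos\theta_{1,p})\, f_2(\cos\theta_{2,p}), \qquad f_1(t)=U_0(t)+2U_1(t)+U_2(t)=4t(t+1),\quad f_2(t)=f_1(-t)=4t(t-1). \]
The product structure is not incidental: it factorizes the coefficients as $c_{ij}=c_ic_j$, which is what makes $\sum_{i,j}|c_{ij}|\cdot i(j+1)/|c_0|^2 = 32$ computable, and what keeps the linear combination of $\Lij$ aligned with the nine Rankin--Selberg $L$-functions. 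A generic bidegree-$(2,2)$ polynomial with the correct sign pattern would not give this factorization.

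Second, the paper's weight is not a Fej\'er kernel but the Bach--Sorenson kernel $(p/x)^a\log(x/p)$ with a free parameter $a\in(0,1/4]$, and your attribution of the $O(\sqrt[3]{\log N})$ error to the $m\ge 2$ prime-power contribution is wrong. Those higher prime powers contribute $O(x^{1/3}\log x)$, which after dividing the inequality by $\sqrt{x}$ is harmless. The cube root actually comes from optimizing $a$: the pole at $s=1$ and the bound $|\zeta'/\zeta(1+a)|\asymp 1/a$ produce an error of size $\asymp 1/a$, while the factor $(1+a)^2/(2a+1)\le 1+a^2$ multiplying the zero-sum produces an error $\asymp a^2\log N$. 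Balancing $a^2\log N$ against $1/a$ forces $a\asymp(\log N)^{-1/3}$, and it is that choice of $a$ that yields the $O(\sqrt[3]{\log N})$ term in the theorem. If you ran the argument with a parameter-free Fej\'er weight you would not recover this error shape.
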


\begin{remark}
One key step in our proof of Theorem~\ref{thm:main} is estimating the digamma function (i.e.\ the logarithmic derivative of the $\Gamma$-function). The estimate we use (see Lemmas~\ref{lem:bitpar} and~\ref{lem:digamma}) depends on the conjectured values of the local parameters at infinity for the symmetric power $L$-functions $L(s, \Sym^k E)$. Given bounds on the local parameters at infinity for elliptic curves over a totally real number field $K/\QQ$, an analogue of Theorem~\ref{thm:main} can be obtained using our methods, yielding a bound of the form $O\big([K : \QQ]^2(\log N)^2\big)$.
\end{remark}

\subsection{Sato-Tate Angles in a Given Interval}
Given an elliptic curve $E/\QQ$ with conductor $N_E$, it is natural to ask the following question: what is the least prime $p$ such that the associated Sato-Tate angle $\theta_p$ is contained in a fixed subinterval $[\alpha, \beta] \subset [0, \pi]$? For the case of CM elliptic curves over $\QQ$, this question is answered in~\cite{us}, where it is shown unconditionally that the least such prime $p$ is bounded above by a polynomial in $\frac{N_E}{\beta-\alpha}$. The case of non-CM elliptic curves is addressed by Lemke Oliver and Thorner in~\cite{thorner}, where a bound (without GRH) for the least such prime $p$ is obtained in terms of the number of symmetric power $L$-functions $L(s, \Sym^k E)$ that are known to have analytic continuations and functional equations of the usual kind. In this paper, we obtain an explicit analogue of Lemke Oliver and Thorner's result under the additional assumption of GRH for the symmetric-power $L$-functions $L(s, \Sym^k E)$. Our complete set of hypotheses is listed in the following conjecture, after which we state our second main theorem:

\begin{conjecture}\label{conj:scrub}
Let $E/\QQ$ be a non-CM, semistable elliptic curve with conductor $N_E$.  For each $k  \geq 0$, the following statements are true:

\begin{enumerate}
\item The conductor of $L(s, \Sym^k E)$ is $N_E^k$.
\item The $\gamma$-factor of $L(s, \Sym^k E)$ is given by
\[
\gamma(s, \Sym^k E) =
	\begin{cases}
		\displaystyle \prod_{j=1}^{(k+1)/2} \Gamma_\CC(s+j-1/2) & \text{ if $k$ is odd,} \\
		\Gamma_\RR(s+ r)\displaystyle \prod_{j=1}^{k/2} \Gamma_\CC(s+j) & \text{ if $k$ is even,}
	\end{cases}
\]
where $\Gamma_\RR(s) \defeq \pi^{-s/2}\Gamma(s/2)$, $\Gamma_\CC(s) \defeq 2(2\pi)^{-s}\Gamma(s)$, and $r \in \{0, 1\}$ is such that $r \equiv k/2 \pmod 2$.

\item In the Euler product that defines $L(s, \Sym^k E)$, namely
\[
L(s, \Sym^k E) = \prod_{p \nmid N_E}\prod_{j=0}^k \left( 1 - e^{i\theta_p(k-2j)} p^{-s} \right)^{-1}
	\cdot \prod_{p \mid N_E} L_p(s, \Sym^k E),
\]
we have for each prime $p \mid N_E$  that $L_p(s, \Sym^k E) = (1 - (-\lambda_p / \sqrt{p})^k p^{-s})^{-1}$, where $\lambda_p \in \{-1, 1\}$ denotes the eigenvalue of the Atkin-Lehner operator $W(p)$ acting on $E$.

\item The completed $L$-function $\Lambda(s, \Sym^k E) \defeq (N_E^k)^{s/2} \gamma(s, \Sym^k E) L(s, \Sym^k E)$ satisfies the functional equation
\[
\Lambda(s, \Sym^k E) = w_{\Sym^k E}\Lambda(1-s, \Sym^k E)
\]
where $w_{\Sym^k E}$ is a complex number of absolute value 1. Furthermore, the function $\left(\frac{1}{2}s(1-s)\right)^{e_k}\Lambda(s, \Sym^k E)$ is entire, where $e_k = 1$ if $k=0$ and $e_k = 0$ otherwise.

\item The Generalized Riemann Hypothesis (GRH); i.e.\ each zero of $\Lambda(s, \Sym^k E)$ has real part equal to $1/2$.

\end{enumerate}
\end{conjecture}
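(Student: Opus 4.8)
Since item~(5) is the Generalized Riemann Hypothesis, it is imposed throughout and is not proved; the plan is to establish items~(1)--(4), which follow from the modularity of $E$ together with the automorphy of its symmetric powers. Write $\rho_E\colon G_\QQ\to\mathrm{GL}_2(\overline{\QQ_\ell})$ for the compatible system of $\ell$-adic representations attached to $E$, equivalently to the weight-$2$ newform $f_E$ of level $N_E$ supplied by modularity, so that $L(s,\Sym^k E)$ is the $L$-function of $\Sym^k\rho_E$. Items~(1), (2), and (3) are purely local assertions about $\Sym^k\rho_E$ --- its Artin conductor, its archimedean Hodge type, and its Euler factors --- and use no global input; only item~(4) requires that $\Sym^k\rho_E$ be automorphic.

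\textbf{Finite places (items (1) and (3)).} For $p\nmid N_E$ the restriction $\rho_E|_{G_{\QQ_p}}$ is unramified with unitarized Frobenius eigenvalues $e^{\pm i\theta_p}$, so $\Sym^k\rho_E|_{G_{\QQ_p}}$ is unramified with eigenvalues $e^{i\theta_p(k-2j)}$ for $0\le j\le k$; these give exactly the Euler factors $\prod_{j=0}^k(1-e^{i\theta_p(k-2j)}p^{-s})^{-1}$ of item~(3) and contribute nothing to the conductor. For $p\mid N_E$, semistability forces multiplicative reduction, so $\rho_E|_{G_{\QQ_p}}$ is (a twist of) the Steinberg representation: its inertia-invariant subspace is one-dimensional and Frobenius acts on it by the unit $\alpha_p=-\lambda_p/\sqrt p$, where $\lambda_p$ is the Atkin--Lehner eigenvalue --- the sign being pinned down by $a_E(p)=-\lambda_p$ at $k=1$. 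Since $\Sym^k$ of the size-$2$ unipotent Jordan block through which inertia acts is a single size-$(k+1)$ unipotent block, $(\Sym^k\rho_E)^{I_p}$ is again one-dimensional; hence the conductor exponent of $\Sym^k\rho_E$ at $p$ equals $(k+1)-1=k$, and as $N_E$ is squarefree the total conductor is $\prod_{p\mid N_E}p^k=N_E^k$, which is item~(1). Frobenius acts on $(\Sym^k\rho_E)^{I_p}$ by $\alpha_p^k$, giving $L_p(s,\Sym^k E)=\big(1-(-\lambda_p/\sqrt p)^k p^{-s}\big)^{-1}$, which is item~(3).

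\textbf{Archimedean place (item (2)).} Because $f_E$ has weight $2$, the archimedean $L$-parameter of $E$ is $\mathrm{Ind}_{W_\CC}^{W_\RR}\mu$ with $\mu(re^{i\varphi})=e^{i\varphi}$. Decomposing $\Sym^k$ of this two-dimensional $W_\RR$-representation and applying the archimedean Langlands dictionary --- the induction to $W_\RR$ of the character $z\mapsto(z/\bar z)^{m/2}$ ($m\ge 1$) corresponds to $\Gamma_\CC(s+m/2)$, the trivial character to $\Gamma_\RR(s)$, and the sign character to $\Gamma_\RR(s+1)$ --- yields the stated case split: for odd $k$ the $\tfrac{k+1}{2}$ two-dimensional summands give $\prod_{j=1}^{(k+1)/2}\Gamma_\CC(s+j-\tfrac12)$; for even $k$ the $\tfrac k2$ two-dimensional summands give $\prod_{j=1}^{k/2}\Gamma_\CC(s+j)$, while the one-dimensional summand spanned by the middle-weight vector is the trivial or the sign character according to the parity of $k/2$, contributing $\Gamma_\RR(s+r)$ with $r\equiv k/2\pmod 2$. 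This is item~(2).

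\textbf{Functional equation (item (4)) and the main obstacle.} This step genuinely uses automorphy: by Clozel--Thorne for $k\le 8$ and by Newton--Thorne in general, $\Sym^k\rho_E$ is attached to a cuspidal automorphic representation $\pi_k$ of $\mathrm{GL}_{k+1}(\mathbb A_\QQ)$, with $\pi_0$ trivial and $L(s,\Sym^0 E)=\zeta(s)$. The standard theory of automorphic $L$-functions then furnishes the meromorphic continuation and a functional equation in $s\mapsto 1-s$ whose conductor and $\gamma$-factor are the products of the local data computed above --- so the completed function is $\Lambda(s,\Sym^k E)=(N_E^k)^{s/2}\gamma(s,\Sym^k E)L(s,\Sym^k E)$ --- and whose root number $w_{\Sym^k E}$ has absolute value $1$; self-duality of $\Sym^k\rho_E$, a consequence of $\det\rho_E$ being cyclotomic, makes the functional equation relate $\Lambda(s,\Sym^k E)$ to $\Lambda(1-s,\Sym^k E)$. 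Finally a cuspidal automorphic $L$-function on $\mathrm{GL}_{k+1}$ is entire for $k\ge 1$, while for $k=0$ the only pole is the simple pole of $\zeta(s)$ at $s=1$ --- precisely what the factor $\big(\tfrac12 s(1-s)\big)^{e_k}$ with $e_k=\delta_{k,0}$ is there to cancel --- completing item~(4). The local computations are routine once one has the description of $\rho_E$ at the ramified and archimedean places; the real content --- and the reason these facts are bundled into a conjecture in a paper that only invokes $k\le 2$ for Theorem~\ref{thm:main} and $k\le 8/\mu$ for the second main theorem --- is the automorphy input, which for a fixed $k$ is the deep theorem of Newton--Thorne (Clozel--Thorne for $k\le 8$) rather than anything elementary. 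Item~(5), GRH, lies beyond current technology and is assumed.
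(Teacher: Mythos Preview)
The paper does not prove this statement: it is explicitly labeled a \emph{conjecture} and used only as a standing hypothesis for Theorem~\ref{thm:roar}. The remark following the conjecture notes that statements (1)--(4) would follow from Langlands functoriality, that Cogdell--Michel computed the relevant local data assuming a global lift, and that such a lift was known at the time only for $k\le 4$; no proof is offered in the paper itself. So there is nothing to compare your argument to on the paper's side.

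Your proposal goes well beyond the paper by actually sketching a proof of (1)--(4). The outline is sound: the local computations at finite and archimedean places are the standard ones for symmetric powers of a weight-$2$ Steinberg-type newform, and invoking Newton--Thorne (which postdates the paper) for the global automorphy of $\Sym^k f_E$ on $\mathrm{GL}_{k+1}$ does supply the analytic continuation and functional equation in full generality. Your remark that item~(5) remains unprovable is of course correct. In short, you have supplied an argument the paper deliberately does not attempt; the paper treats Conjecture~\ref{conj:scrub} purely as an assumption, whereas you (correctly) observe that (1)--(4) are now theorems via Newton--Thorne, with (5) still genuinely conjectural.
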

\begin{remark} 
Statements (1)--(4) in Conjecture~\ref{conj:scrub} are reasonable assumptions to make, because Langlands functoriality would imply that $L(s, \Sym^k E)$ is the $L$-function of a cuspidal automorphic representation on $\operatorname{GL}_{k+1}(\mathbb{A}_\QQ)$. In~\cite{cogdell}, Cogdell and Michel computed the $L$-function data stated in Conjecture~\ref{conj:scrub}, under the assumption of a global lifting map on automorphic representations that is compatible with the local Langlands correspondence. The existence of such a lifting map has been shown for all $k \le 4$, thus proving statements (1)--(4) for these cases (see ~\cites{sym2,sym3,sym4,sym12}). Finally, note that statements (1)--(4) can be generalized to any non-CM newform with squarefree level and positive, even weight, whereas GRH is unknown for all $L$-functions.
\end{remark}
With the hypotheses stated in Conjecture~\ref{conj:scrub}, we obtain the following Linnik-type result; once again, we refer the reader to~\eqref{eq:explicit_roar} for the completely explicit version of the theorem.
\begin{theorem}\label{thm:roar}
Let $E/\QQ$ be a non-CM, semistable elliptic curve with conductor $N_E$.
Let $I \subseteq [0,\pi]$ be an interval with Sato-Tate measure $\mu$.
Under the assumption of Conjecture~\ref{conj:scrub} for $L(s, \Sym^k E)$
for all $k \leq 8/\mu$,
there exists a prime $p$ such that $\theta_p \in I$ and
\[ p < \left(\frac{21\log(N_E/\mu) + O\big(\sqrt[3]{\log(N_E/\mu)}\big)}{\mu^2}\right)^2. \]
\end{theorem}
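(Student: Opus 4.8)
\textbf{Proof proposal for Theorem~\ref{thm:roar}.}

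The plan is to adapt the explicit-formula machinery used for Theorem~\ref{thm:main} to the single-curve setting, now testing membership in the interval $I$ against the Sato--Tate measure. First I would fix a nonnegative trigonometric test function $F(\theta) = \sum_{k=0}^{K} \wh F(k)\, U_k(\theta)$ expanded in the basis of symmetric-power trace polynomials $U_k(\theta) = \sin((k+1)\theta)/\sin\theta$ (so that $U_k(\theta_p) = a_{\Sym^k E}(p)$ when $p \nmid N_E$), chosen so that $F$ is supported (or concentrated) on $I$ with $\wh F(0) = \mu$ roughly equal to the Sato--Tate measure of $I$, and with the tail coefficients $\wh F(k)$ controlled. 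The Fejér-kernel / Beurling--Selberg type majorant-minorant constructions are the natural candidates here; one wants $\wh F(0)$ comparable to $\mu$ while $\sum_k |\wh F(k)|$ stays bounded by roughly $1/\mu$ and the cutoff $K$ is of order $1/\mu$ — this is precisely why the hypothesis requires Conjecture~\ref{conj:scrub} for all $k \le 8/\mu$. Then I would weight by a smooth prime cutoff: consider $\sum_{p} F(\theta_p) (\log p)\, \Phi(p/x)$ for a suitable weight $\Phi$ and threshold $x$, and show this sum is positive once $x$ is large enough, which forces the existence of a prime $p \le x$ with $\theta_p \in I$ (after separately discarding the negligible contribution of primes outside $I$ in the Beurling--Selberg setup, and of prime powers and bad primes $p \mid N_E$).

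The main term comes from $\wh F(0)$ times the number of primes up to $x$ (size $\sim \mu \cdot x/\log x$ after the $\log p$ weighting, i.e.\ $\sim \mu x$), and each remaining Fourier mode $\wh F(k)$ with $1 \le k \le K$ contributes an error governed by the explicit formula for $L(s, \Sym^k E)$: the sum $\sum_p a_{\Sym^k E}(p)(\log p)\Phi(p/x)$ is, under GRH for $\Sym^k E$, bounded in terms of the analytic conductor of $\Sym^k E$ — namely $\log(N_E^k) + (\text{archimedean factor from } \gamma(s,\Sym^k E))$ — times $\sqrt{x}$ up to lower-order terms. Here I would invoke exactly the digamma estimates packaged in Lemmas~\ref{lem:bitpar} and~\ref{lem:digamma}, which evaluate the archimedean contribution using the explicit $\gamma$-factors from Conjecture~\ref{conj:scrub}(2); summing the $\gamma$-factor contribution over $j$ gives something of size $O(k \log k)$ per mode. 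Collecting, positivity of the whole sum is guaranteed as soon as
\[
\mu \cdot x \;\gg\; \sqrt{x}\sum_{k=1}^{K} |\wh F(k)|\,\Big( k\log N_E + O(k\log k)\Big),
\]
and since $\sum_k |\wh F(k)| \cdot k \ll K^2 \ll \mu^{-2}$ (using $K \asymp 1/\mu$), this reads $\mu x \gg \sqrt{x}\,\mu^{-2}\log(N_E/\mu)$, i.e.\ $\sqrt{x} \gg \mu^{-3}\log(N_E/\mu)$. To sharpen the constant to $21$ one optimizes the shape of $F$ and $\Phi$ jointly — balancing $\wh F(0)$ against $\sum |\wh F(k)|$ and the support of $\Phi$ against the error in the explicit formula — exactly as in the proof of Theorem~\ref{thm:main}; this optimization, together with bookkeeping the lower-order $O(\sqrt[3]{\log(N_E/\mu)})$ terms, is where most of the technical work lies.

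The step I expect to be the main obstacle is controlling the dependence on $\mu$ with the right power and constant simultaneously: one needs a test function whose zeroth coefficient is close to $\mu$ \emph{and} whose higher coefficients decay fast enough that the $k$-weighted $\ell^1$ norm is $O(\mu^{-2})$ rather than merely $O(\mu^{-2}\log(1/\mu))$, since any extra logarithm would pollute the clean $\mu^{-2}\log(N_E/\mu)$ shape. This is a genuine approximation-theory problem about nonnegative majorants/minorants of the indicator of a short arc against $\sin^2\theta\,d\theta$, and getting the constant $21$ requires choosing the near-extremal function carefully (likely a Fejér kernel dilated to scale $\mu$, truncated at order $\asymp 1/\mu$) and then feeding its exact coefficients through the explicit formula. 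A secondary technical point is handling the endpoints of $I$ and the case where $I$ is so short that $1/\mu$ exceeds the range $k \le 8/\mu$ only by bounded factors — but once the test-function constants are pinned down, the rest is the same explicit-formula and digamma estimation already carried out for Theorem~\ref{thm:main}, applied mode-by-mode and summed.
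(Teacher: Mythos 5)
Your overall scheme matches the paper's proof: a Beurling--Selberg-type minorant of $\chi_I$ expanded in the Chebyshev basis $U_k(\cos\theta)$ (the $\Xi_k$ of Section~\ref{longlistofexlovers}), a smooth weighted prime sum, and the explicit formula for each $L(s,\Sym^k E)$ with the digamma estimate from Lemma~\ref{lem:bitpar}. The choice $M=\lceil 8/\mu\rceil$ and the lower bound $|\Xi_0|\ge\mu/2$ are both exactly as you anticipate.

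However, there is a quantitative gap that, as written, loses a full power of $\mu^{-1}$ in the final bound. You estimate $\sum_{k=1}^K k\,|\wh F(k)|\ll K^2\ll\mu^{-2}$, and you then conclude $\sqrt x\gg\mu^{-3}\log(N_E/\mu)$, i.e.\ $p\ll\mu^{-6}(\log(N_E/\mu))^2$; this is weaker than the claimed $p\ll\mu^{-4}(\log(N_E/\mu))^2$. The issue is that you are treating the Fourier coefficients of the minorant as if they were $O(1)$, when in fact the Beurling--Selberg/Vaaler construction yields $|\Xi_k|\ll 1/M$ for $1\le k\le M$ (visible in the factor $\tfrac{2}{M+1}$ in~\eqref{gangnamstyle}). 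Consequently the $k$-weighted $\ell^1$ norm is $O(M)=O(\mu^{-1})$, not $O(M^2)$; this is precisely the content of Lemma~\ref{callmemaybe}, which gives the sharp bound $\sum_{k=0}^M k|\Xi_k|\le CM/16$ with $C=32(1/3+1/\pi)\approx 20.853$, yielding the leading constant in the theorem. Your later diagnosis that the ``main obstacle'' is forcing the $k$-weighted $\ell^1$ norm to be $O(\mu^{-2})$ rather than $O(\mu^{-2}\log(1/\mu))$ has the right flavor but the wrong exponent: the target actually needed (and achieved by the decaying $\Xi_k$) is $O(\mu^{-1})$. Once that correction is made, the division of the error $\sqrt x\cdot O(\mu^{-1})\log(N_E/\mu)$ by the main term $(\mu/2)x$ gives the stated $\sqrt x\gg\mu^{-2}\log(N_E/\mu)$, and the remainder of your outline (digamma estimates, residue at $s=-a$, optimization of $a$ and $M$, absorption of $\log(1/\mu)$ into $\log(N_E/\mu)$) proceeds as in Sections~\ref{likith}.

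One smaller point: a nonnegative trigonometric polynomial ``supported on $I$'' does not exist; what is used is a \emph{minorant} $f\le\chi_I$, which is necessarily negative on part of $[0,\pi]\setminus I$. You alternate between these two descriptions; the argument only goes through with the minorant, since positivity of $\sum_p f(\theta_p)(\log p)\Phi(p/x)$ forces some $p$ with $f(\theta_p)>0$, hence $\theta_p\in I$.
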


The rest of this paper is organized as follows. Section~\ref{defs} both presents an introduction to the analytic theory of symmetric power $L$-functions associated to elliptic curves and discusses the fundamental tools that we employ in our proofs of Theorems~\ref{thm:main} and~\ref{thm:roar}. Next, Section~\ref{proof1} details the main ideas behind the proof of Theorem~\ref{thm:main}, and Section~\ref{sec:pain} completes the proof of Theorem~\ref{thm:main}, making all estimates in Section~\ref{proof1} fully explicit. In Section~\ref{likith}, we provide a proof of Theorem~\ref{thm:roar}. Finally, in Appendix~\ref{rawrconstants}, we discuss the optimality of our estimates, and in Appendix~\ref{tabwars}, we make a list of all constants and variables used throughout the paper.
\section{Symmetric Power $L$-Functions of an Elliptic Curve}\label{defs}
In this section, we provide a brief discussion of the symmetric power $L$-functions associated to elliptic curves, as well as the Rankin-Selberg convolutions of two such $L$-functions. Many of the definitions and properties stated in this section are employed in our proofs of Theorems~\ref{thm:main} and~\ref{thm:roar}.

\subsection{Properties of $L(s, \Sym^k E)$}\label{kittypurry} Fix a semistable elliptic curve $E$ over $\QQ$. It clearly follows from statement (3) in Conjecture~\ref{conj:scrub} and the Hasse bound that $L(s, \Sym^k E)$ satisfies the Ramanujan-Petersson Conjecture. In our proof of Theorem~\ref{thm:main}, we make use of the local parameters of $L(s, \Sym^k E)$ at infinity for $k \in \{0, 1, 2\}$, and in our proof of Theorem~\ref{thm:roar}, we make use of the local parameters of $L(s, \Sym^k E)$ at infinity for all $k \geq 0$. The values of these parameters can be derived from statement (2) of Conjecture~\ref{conj:scrub}, and for convenience, we state them explicitly as follows (with multiplicity):
\begin{enumerate}
\item If $k$ is odd, then the local parameters at infinity for $L(s, \Sym^k E)$ are given by \mbox{$(2j+1)/2$,} where $j \in \{0, \dots, (k+1)/2\}$.  These parameters all have multiplicity $2$ except for $1/2$ and $(k+2)/2$, which have multiplicity $1$.
\item If $k \equiv 2 \pmod 4$, then the local parameters at infinity for $L(s, \Sym^k E)$ are given by $j$, where $j \in \{1, \dots, k/2 + 1\}$. These parameters all have multiplicity $2$ except for $(k+2)/2$, which has multiplicity $1$.
\item If $k \equiv 0 \pmod 4$ and $k > 0$, then the local parameters at infinity for $L(s, \Sym^k E)$ are given by $j$, where $j \in \{0, \dots, k/2+1\}$. These parameters all have multiplicity $2$ except for $0$, $1$, and $(k+2)/2$, which have multiplicity $1$.
\end{enumerate}
Observe that we can express the logarithmic derivative of $L(s, \Sym^k E)$ as a Dirichlet series in the following way:
\begin{equation}\label{eq:psy1}
	-\frac{L'}{L}(s, \Sym^k E)
	=
	\sum_{n = 1}^\infty \Lambda_k(n) n^{-s},
\end{equation}
where the coefficients $\Lambda_k(n)$ are supported on prime powers and are defined by
\[
	\Lambda_k(n) = \begin{cases}
		(\log p) U_k(\cos m\theta_{p})
			& \text{if $n = p^m$ for $p \nmid N$, $m \geq 1$}\\
		(\log p)t_{k,p,m}p^{-km/2}	& \text{if $n = p^m$ for $p \mid N$, $m \geq 1$} \\
		0 & \text{else}.
\end{cases} \]
Here, $|t_{k,p,m}| = 1$ for all $k,p,m$, and $U_k(x)$ is the $k^\mathrm{th}$ Chebyshev polynomial of the second kind and is defined recursively as follows: let $U_0(x) = 1$ and $U_1(x) = 2x$, and for each integer $k \geq 2$, let $U_k(x) = 2xU_{k-1}(x) - U_{k-2}(x)$.

We now provide an estimate on the logarithmic derivative of the $\gamma$-factor $\gamma(s, \Sym^k E)$ associated to $L(s, \Sym^k E)$; for ease of notation, we shall write $\gamma_k(s) = \gamma(s, \Sym^k E)$, which is unambiguous because we have fixed our curve $E$. To do so, we will require the following lemma, which is due to Ono and Soundararajan (see the proof of Lemma 4 in~\cite{onobound}):
\begin{lemma}\label{arouse}
	If $\sigma \geq 1$, then we have
	\[ \left\lvert \frac{\Gamma'}{\Gamma}(s) \right\rvert \le \frac{11}{3} + \log(|s|+1). \] 
\end{lemma}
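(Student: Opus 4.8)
The plan is to start from the second Binet integral representation of the digamma function, which is valid for $\Re s > 0$. Differentiating
\[
\log\Gamma(s) = \left(s - \half\right)\log s - s + \half\log(2\pi) + 2\int_0^\infty \frac{\arctan(t/s)}{e^{2\pi t} - 1}\, dt
\]
with respect to $s$ gives
\[
\dGam(s) = \log s - \frac{1}{2s} - 2\int_0^\infty \frac{t\, dt}{(t^2 + s^2)(e^{2\pi t} - 1)},
\]
and I would bound the three terms on the right-hand side separately under the hypothesis $\sigma = \Re s \ge 1$, which in particular forces $|s| \ge 1$.

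For the logarithm, write $\log s = \log|s| + i\arg s$; since $\Re s > 0$ we have $|\arg s| < \pi/2$, so $|\log s| \le \log|s| + \pi/2 \le \log(|s| + 1) + \pi/2$. For the second term, $\left\lvert \frac{1}{2s} \right\rvert \le \frac{1}{2|s|} \le \half$. For the integral, the key point is the factorization $t^2 + s^2 = (s + it)(s - it)$: since $t$ is real, both factors have real part equal to $\sigma$, hence modulus at least $\sigma \ge 1$, so $|t^2 + s^2| \ge \sigma^2 \ge 1$ for every $t \ge 0$. Therefore
\[
\left\lvert 2\int_0^\infty \frac{t\, dt}{(t^2 + s^2)(e^{2\pi t} - 1)} \right\rvert
\le 2\int_0^\infty \frac{t\, dt}{e^{2\pi t} - 1}
= \frac{2}{(2\pi)^2}\int_0^\infty \frac{u\, du}{e^u - 1}
= \frac{2}{4\pi^2}\cdot\frac{\pi^2}{6}
= \frac{1}{12}.
\]
Summing the three bounds yields $\left\lvert \dGam(s) \right\rvert \le \log(|s| + 1) + \pi/2 + \half + 1/12 < \log(|s|+1) + 11/3$, since $\pi/2 + 7/12 \approx 2.15$.

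The argument is entirely routine once the Binet formula is on the table; the only mild subtlety worth flagging is the lower bound $|t^2 + s^2| \ge 1$ for complex $s$, for which a crude estimate such as $|t^2 + s^2| \ge \bigl\lvert |s|^2 - t^2 \bigr\rvert$ is useless near $t = |s|$ — the product factorization sidesteps this cleanly. Alternatively, one can simply invoke the proof of Lemma 4 of~\cite{onobound}, where this bound originates; we reproduce the short argument for completeness. Note finally that the constant $11/3$ is far from optimal for this method — any constant exceeding $\pi/2 + 7/12$ would suffice — but it is a convenient round value and is all that is needed in what follows.
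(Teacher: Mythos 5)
Your proof is correct. The paper itself offers no proof of this lemma — it simply attributes the bound to Ono and Soundararajan and points to the proof of Lemma 4 in their paper — so there is no in-paper argument to compare against line by line. Your derivation from Binet's second integral is the standard route to estimates of exactly this shape, and every step checks out: the differentiated Binet formula is right, the decomposition $|\log s| \le \log|s| + |\arg s| \le \log(|s|+1) + \pi/2$ uses $|s|\ge\sigma\ge 1$ correctly, the bound $|1/(2s)|\le 1/2$ is immediate, and the observation that $|t^2+s^2| = |s+it|\,|s-it| \ge \sigma^2 \ge 1$ (because both factors have real part $\sigma$) is precisely the clean way to handle the integral — you are right that the naive $\bigl||s|^2 - t^2\bigr|$ lower bound degenerates near $t=|s|$. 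The final arithmetic $\pi/2 + 1/2 + 1/12 \approx 2.15 < 11/3$ closes the argument with room to spare, as you note. The only thing I would tighten is a one-line remark justifying differentiation under the integral sign (the integrand and its $s$-derivative are dominated, uniformly on compacta in $\Re s > 0$, by the exponentially decaying $t/(e^{2\pi t}-1)$), which is implicit but worth stating in a proof presented as self-contained.
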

 We now state and prove the digamma estimate that we use in the proof of Theorem~\ref{thm:main}:
\begin{lemma} \label{lem:bitpar}
	Let $k \ge 0$. If $s\in \RR$ is such that $-1/4 < s < 0$, then we have that
    \[ 
    	\left\lvert \dgk (s) \right\rvert
    	\le \frac {k+1}{2} \left(\frac{11}{3} + \log \pi +
		\log\left(\frac{s}{2} + \frac{k+24}{8} \right) \right) + \frac{28}{5} + \frac{\Delta}{|s|},
    \]
    where we define $\Delta \defeq 1$ when $k \equiv 0 \pmod 4$ and $\Delta \defeq 0$ otherwise. If $s \in \RR$ is such that $s \geq 1$, then we have that
    \[\left\lvert \dgk (s) \right\rvert
    	\le \frac {k+1}{2} \left(\frac{11}{3} + \log \pi +
		\log\left(\frac{s}{2} + \frac{k+16}{8} \right) \right) + 1.  \]
    \end{lemma}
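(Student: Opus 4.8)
The plan is to reduce everything to the logarithmic derivative $\dGam$ of the $\Gamma$-function, after first rewriting $\gamma_k(s)$ purely in terms of $\Gamma_\RR$. By the Legendre duplication formula $\Gamma_\CC(u) = \Gamma_\RR(u)\Gamma_\RR(u+1)$, so statement~(2) of Conjecture~\ref{conj:scrub} can be rephrased as $\gamma_k(s) = \prod_{\mu}\Gamma_\RR(s+\mu)$, where $\mu$ ranges (with multiplicity) over the list of local parameters at infinity displayed right after Conjecture~\ref{conj:scrub}; there are exactly $k+1$ of them, and a short computation gives their sum $S_k \defeq \sum_\mu\mu$ as $\tfrac{(k+1)(k+3)}{4}$, $\tfrac{(k+2)^2}{4}$, or $\tfrac{k(k+4)}{4}$ according as $k$ is odd, $k\equiv2\pmod4$, or $k\equiv0\pmod4$ (the last formula also covering $k=0$). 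Since $\frac{\Gamma_\RR'}{\Gamma_\RR}(u) = -\tfrac12\log\pi + \tfrac12\dGam(u/2)$, this yields the clean identity
\[
\dgk(s) = -\frac{k+1}{2}\log\pi + \frac12\sum_{\mu}\dGam\!\left(\frac{s+\mu}{2}\right),
\qquad\text{so}\qquad
\left|\dgk(s)\right| \le \frac{k+1}{2}\log\pi + \frac12\sum_{\mu}\left|\dGam\!\left(\frac{s+\mu}{2}\right)\right|.
\]
Passing to $\Gamma_\RR$ is essential: using $\Gamma_\CC(s+j)$ directly would leave spurious $\log2$'s and would put $s+j$, not $\tfrac{s+j}{2}$, inside the logarithm, and the bound would then fail for large $k$.

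The next step is a uniform per-term estimate,
\[
\left|\dGam\!\left(\tfrac{s+\mu}{2}\right)\right| \;\le\; \tfrac{11}{3} + \log\!\left(\tfrac{s+\mu}{2}+1\right)
\qquad\text{whenever}\quad s+\mu \ge \tfrac34 .
\]
If $\tfrac{s+\mu}{2}\ge1$ this is exactly Lemma~\ref{arouse}. If $0 < \tfrac{s+\mu}{2} < 1$, the recurrence $\dGam(x) = \dGam(x+1) - \tfrac1x$ and the elementary bound $|\dGam(y)|<1$ for $y\in(1,2)$ (since $\dGam$ is increasing with $\dGam(1)=-\gamma$, $\dGam(2)=1-\gamma$) give $|\dGam(x)| < 1 + \tfrac1x \le \tfrac{11}{3}$ once $x\ge\tfrac38$, i.e.\ once $s+\mu\ge\tfrac34$; and $\tfrac{11}{3}\le\tfrac{11}{3}+\log(x+1)$ as $x+1\ge1$.

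For $s\ge1$, every parameter has $s+\mu\ge1>\tfrac34$, so the per-term estimate applies to all $k+1$ terms. Summing and using concavity of $\log$,
\[
\frac12\sum_{\mu}\log\!\left(\tfrac{s+\mu}{2}+1\right) \;\le\; \frac{k+1}{2}\log\!\left(\tfrac s2 + 1 + \tfrac{S_k}{2(k+1)}\right),
\]
and one checks in each residue class of $k\bmod4$ that $1 + \tfrac{S_k}{2(k+1)} \le \tfrac{k+16}{8}$; assembling these gives $\left|\dgk(s)\right| \le \tfrac{k+1}{2}\bigl(\tfrac{11}{3}+\log\pi+\log(\tfrac s2 + \tfrac{k+16}{8})\bigr)$, which is the second inequality of the lemma (with the additive $1$ to spare). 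For $-\tfrac14<s<0$, every parameter with $\mu\ge1$ still satisfies $s+\mu>\tfrac34$, so the only one needing extra care is the unique exceptional parameter --- namely $\mu=0$ when $k\equiv0\pmod4$ (including $k=0$), or $\mu=\tfrac12$ when $k$ is odd. For it I apply $\dGam(x)=\dGam(x+1)-\tfrac1x$ once, effectively replacing $\mu$ by $\mu+2$ (which then satisfies $s+(\mu+2)>\tfrac34$, so the per-term estimate covers the shifted term) at the cost of an additive error $\tfrac12\cdot\tfrac1x$: this equals $\tfrac{1}{s+1/2}<4$ when $\mu=\tfrac12$, and exactly $\tfrac{1}{|s|}$ when $\mu=0$ --- the latter being the source of the $\tfrac{\Delta}{|s|}$ term (note $\Delta=1$ precisely when $k\equiv0\pmod4$, i.e.\ precisely when $\mu=0$ occurs). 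Redoing the concavity step with $S_k$ increased by at most $2$, one checks $1 + \tfrac{S_k+2}{2(k+1)} \le \tfrac{k+24}{8}$ in each case --- the jump from $\tfrac{k+16}{8}$ to $\tfrac{k+24}{8}$ being exactly what the shift costs --- and collecting the errors (at worst $<4<\tfrac{28}{5}$, plus $\tfrac{\Delta}{|s|}$) yields the first inequality.

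The main obstacle is the bookkeeping rather than any single estimate: one must pin down which of the $k+1$ arguments $\tfrac{s+\mu}{2}$ fall below $1$, and the unique one (at $\mu=0$, when $k\equiv0\pmod4$) that can fall below $0$ and sit next to the pole of $\dGam$ --- forcing the unavoidable $\tfrac{\Delta}{|s|}$ term --- shift exactly those, and then verify, using the closed forms for $S_k$ in each residue class of $k\bmod4$, that the Jensen bound still stays below $\tfrac{k+16}{8}$ (resp.\ $\tfrac{k+24}{8}$) while the accumulated additive errors stay below $1$ (resp.\ $\tfrac{28}{5}$). What makes it all fit is the looseness of Lemma~\ref{arouse} for large arguments (its constant $\tfrac{11}{3}$ is far from sharp) together with the comfortable gap between $\tfrac{S_k}{2(k+1)}\sim\tfrac k8$ and the targets $\tfrac{k+16}{8}$, $\tfrac{k+24}{8}$.
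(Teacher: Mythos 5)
Your proof is correct and follows the same architecture as the paper's: rewrite $\gamma_k$ via the duplication formula, bound each digamma term using Lemma~\ref{arouse} after shifting any argument that sits too far left via $\dGam(x)=\dGam(x+1)-1/x$, and then collect the logarithms with AM--GM/Jensen using the sum $S_k$ of the local parameters. The one genuine (if minor) technical refinement is your per-term estimate: by observing $|\dGam(x)| < 1 + 1/x \le 11/3$ already on $[3/8,1)$, you only ever need to shift the \emph{single} exceptional parameter ($\mu=0$ for $k\equiv0\pmod4$, $\mu=1/2$ for $k$ odd), whereas the paper shifts every argument with real part below $1$, which for $-1/4<s<0$ means touching two or three terms per residue class. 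This buys you somewhat smaller additive errors (e.g.\ $<4$ rather than $28/5$ in the odd case) and a more uniform presentation, but the constants in the lemma statement are loose enough that both routes land safely inside.
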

\begin{proof}
Statement (2) of Conjecture~\ref{conj:scrub} gives us an expression for the $\Gamma$-factor $\gamma_k(s)$. 
By the duplication formula of the $\Gamma$-function we have
\[
	\Gamma_{\mathbb C}(s) = \pi^{-s+\frac12} \Gamma\left( \frac s2 \right) \Gamma\left( \frac{s+1}{2} \right).
\]
We use this to rewrite our expression in the traditional form of the $\Gamma$-factor
(see~\cite[(5.3)]{iwaniec}).
Then taking the logarithmic derivative of the result yields that
$$\frac{\gamma_k'}{\gamma_k}(s) = \begin{cases}
		 \frac{k+1}{2}(-\log \pi) + \frac{1}{2}\left[\frac{\Gamma'}{\Gamma}\left(\frac{s+1/2}{2}\right) + \frac{\Gamma'}{\Gamma}\left(\frac{s+(k+2)/2}{2}\right)\right] + & \\ \qquad \sum_{j=1}^{(k-1)/2}\frac{\Gamma'}{\Gamma}\left(\frac{s+j+1/2}{2}\right) & \text{ if $k$ is odd,} \\
		 \frac{k+1}{2}(-\log \pi) +  \frac{1}{2}\frac{\Gamma'}{\Gamma}\left(\frac{s+k/2+1}{2}\right) +  \sum_{j=1}^{k/2} \frac{\Gamma'}{\Gamma}\left(\frac{s+j}{2}\right) & \text{ if $k \equiv 2 \text{ (mod 4)}$,} \\
        \frac{k+1}{2}(-\log \pi) + \frac{1}{2}\left[\frac{\Gamma'}{\Gamma}\left(\frac{s}{2}\right)+\frac{\Gamma'}{\Gamma}\left(\frac{s+1}{2}\right)+\frac{\Gamma'}{\Gamma}\left(\frac{s+k/2+1}{2}\right)\right] + & \\
        \qquad \sum_{j=1}^{(k-2)/2} \frac{\Gamma'}{\Gamma}\left(\frac{s+j+1}{2}\right) & \text{ if $k \equiv 0 \text{ (mod 4)}$,}
	\end{cases}$$
To estimate the digamma terms in the above expressions, we can use Lemma~\ref{arouse} on all terms except those with argument having real part less than $1$. For the remaining terms, we can apply the identity
\begin{equation}\label{eq:starshipsyo}
\frac{\Gamma'}{\Gamma}(s+1) = \frac{1}{s} + \frac{\Gamma'}{\Gamma}(s)
\end{equation}
sufficiently many times to translate their arguments so that they have real part at least $1$, and only then can we use Lemma~\ref{arouse} to estimate them. Finally, once we have applied Lemma~\ref{arouse}, which changes the digamma terms to logarithm terms, we can use the arithmetic-geometric-mean inequality to estimate the sums of logarithms. For example, consider the case where $k \equiv 2 \pmod 4$ and $s \geq 1$. Then Lemma~\ref{arouse} and the arithmetic-geometric-mean inequality combine to yield the bound
\begin{align*}
&\hphantom{\le} \left|\frac{1}{2}\frac{\Gamma'}{\Gamma}\left(\frac{s+k/2+1}{2}\right) +  \sum_{j=1}^{k/2} \frac{\Gamma'}{\Gamma}\left(\frac{s+j}{2}\right)\right| \\
&\leq 
\left\lvert \frac12\left( \frac{11}{3}+\log\left( \frac{s+k/2+1}{2} + 1 \right) \right)
+ \sum_{j=1}^{k/2} \left( \frac{11}{3} + \log\left( \frac{s+j}{2} + 1 \right) \right) \right\rvert \\
& \leq \frac{k+1}{2}\left(\frac{11}{3} + \log\left(\frac{s}{2} + \frac{(k/2+1)^2}{2(k+1)}+1\right)\right) \\
& \leq \frac{k+1}{2}\left(\frac{11}{3} + \log\left(\frac{s}{2} + \frac{k+12}{8}\right)\right). 
\end{align*}
Applying the general strategy described above, we obtain the following estimates:
\begin{enumerate}
\item When $-1/4 < s < 0$ and $k$ is odd, we have that
$$\left\lvert \dgk (s) \right\rvert
    	\le \frac {k+1}{2} \left(\frac{11}{3} + \log \pi +
		\log\left(\frac{s}{2} + \frac{k+21}{8}\right) \right) + \frac{28}{5}.$$
\item When $-1/4 < s < 0$ and $k \equiv 2 \pmod 4$, we have that
$$\left\lvert \dgk (s) \right\rvert
    	\le \frac {k+1}{2} \left(\frac{11}{3} + \log \pi +
		\log\left(\frac{s}{2} + \frac{k+24}{8}\right) \right) + \frac{80}{21}.$$
\item When $-1/4 < s < 0$ and $k \equiv 0 \pmod 4$, we have that
$$\left\lvert \dgk (s) \right\rvert
    	\le \frac {k+1}{2} \left(\frac{11}{3} + \log \pi +
		\log\left(\frac{s}{2} + \frac{k+24}{8}\right) \right) + \frac{64}{21} + \frac{1}{|s|}.$$
\item When $s \geq 1$ and $k$ is odd, we have that
$$\left\lvert \dgk (s) \right\rvert
    	\le \frac {k+1}{2} \left(\frac{11}{3} + \log \pi +
		\log\left(\frac{s}{2} + \frac{k+16}{8}\right) \right) + \frac{2}{3}.$$
\item When $s \geq 1$ and $k \equiv 2 \pmod 4$, we have that
$$\left\lvert \dgk (s) \right\rvert
    	\le \frac {k+1}{2} \left(\frac{11}{3} + \log \pi +
		\log\left(\frac{s}{2} + \frac{k+12}{8}\right) \right).$$
\item When $s \geq 1$ and $k \equiv 0 \pmod 4$, we have that
$$\left\lvert \dgk (s) \right\rvert
    	\le \frac {k+1}{2} \left(\frac{11}{3} + \log \pi +
		\log\left(\frac{s}{2} + \frac{k+16}{8}\right) \right) + 1.$$
\end{enumerate}
The lemma follows readily from the above six results.
\end{proof}

\subsection{Properties of $L(s, \PSY{i}{j})$}
Given two nonisogenous, non-CM, semistable elliptic curves $E_1$ and $E_2$ over $\QQ$ with conductors $N_{E_1}$ and $N_{E_2}$, respectively, we denote by $L(s, \PSY ij)$ the Rankin-Selberg convolution of $L(s, \Sym^i E_1)$ and $L(s, \Sym^j E_2)$, which is defined by the following Euler product:
$$L(s, \PSY{i}{j}) = \prod_p \left(1 - \alpha(p)\beta(p)p^{-s}\right)^{-1},$$
where $\alpha(n)$ and $\beta(n)$ respectively denote the Dirichlet series coefficients of $L(s, \Sym^i E_1)$ and $L(s, \Sym^j E_2)$.
Recall that by construction, $L(s, \PSY{i}{j})$ is self-dual and has degree $d_{ij} = (i+1)(j+1)$, and observe that we can express the logarithmic derivative of $L(s, \PSY{i}{j})$ as a Dirichlet series in the following way:
\begin{equation}\label{eq:psy}
	-\frac{L'}{L}(s, \PSY ij)
	=
	\sum_{n=1}^\infty \Lij(n) n^{-s},
\end{equation}
where the coefficients $\Lij(n)$ are supported on prime powers and are defined by
\[
	\Lij(n) = \begin{cases}
		(\log p) U_i(\cos m\theta_{1,p})U_j(\cos m\theta_{2,p})
			& \text{if $n = p^m$ for $p \nmid N$, $m \geq 1$}\\
		(\log p)t_{i,j,p,m}p^{-m(i+j)/2}	& \text{if $n = p^m$ for $p \mid N$, $m \geq 1$} \\
		0 & \text{else};
\end{cases} \]
again, we have that $|t_{i,j,p,m}| = 1$ for all $i,j,p,m$. We now make note of a number of analytic properties of $L(s, \PSY{i}{j})$:
\begin{enumerate}
\item The $L$-function $L(s, \PSY{i}{j})$ has a pole at $s = 1$ if and only if $i = j = 0$ (see the footnote in Section 1.7 of~\cite{lastsource}). In this case, we obtain the Riemann-$\zeta$ function, which has a pole of order $1$ and residue $1$ at $s = 1$. Later, it will be convenient to put $e_{k} = 1$ when $k = 0$ and $e_{k} = 0$ otherwise, as a means of quantifying the order of the possible pole at $s = 1$.
 \item For $0 \leq i,j \leq 4$ such that $(i,j) \neq (0,0)$, we have from the discussion in Section 1 of~\cite{brumley} that $L(s, \PSY{i}{j})$ has an analytic continuation to an entire function on $\CC$, as well as a functional equation.
\item For $0 \leq i,j \leq 4$, the arithmetic conductor $N_{i \otimes j}$ of $L(s, \PSY{i}{j})$ satisfies $N_{i \otimes j} \leq N_{E_1}^{j(i+1)} N_{E_2}^{i(j+1)}$.
\item For $0 \leq i,j \leq 4$, the local parameters at infinity of $L(s, \PSY{i}{j})$ are of the form $\kappa + \nu$, with $\kappa$ and $\nu$ being the local parameters at infinity of the $L$-functions $L(s, \Sym^{i} E_1)$ and $L(s, \Sym^{j} E_2)$, respectively, counted with multiplicity; see (7) of~\cite{brumley}.
\item We denote the $\gamma$-factor of $L(s, \PSY{i}{j})$ by $\gamma_{i \otimes j}(s)$. We present an important estimate for $\dgij(s)$ in Lemma~\ref{lem:digamma}.
\item Combining the information from properties (2)--(5) above, we obtain the following completed $L$-function:
\begin{equation}\label{eq:compsies}
\Lambda(s, \PSY{i}{j}) = (s(1-s))^{e_{i+j}}N_{i \otimes j}^{s/2}\gamma_{i \otimes j}(s)L(s, \PSY{i}{j}).
\end{equation}
which is known to be entire for $0 \leq i,j \leq 4$.
\end{enumerate}

The following lemma contains the digamma estimate that we will employ in our proof of Theorem~\ref{thm:main}:
\begin{lemma}
	\label{lem:digamma}
	Let $0 \leq i,j \leq 2$. If $s \in \RR$ is such that $-1/4 < s < 0$, then we have that
	\[ \left\lvert \dgij(s) \right\rvert
		\le  
		\frac{d_{ij}}{2} \left(\frac{11}{3} + \log\pi + \log\left(\frac{s}{2}+3\right) \right) + 5 + \frac{e_{i+j}}{|s|}.\]
If $s \in \RR$ is such that $s \geq 1$, then we have that
$$\left\lvert \dgij(s) \right\rvert
		\le  
		\frac{d_{ij}}{2} \left(  \frac{11}{3}+ \log\pi + \log\left(\frac{s}{2}+3\right)  \right)+ 1.$$
\end{lemma}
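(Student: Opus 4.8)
The plan is to reduce everything to the digamma estimate of Lemma~\ref{arouse}, exactly as in the proof of Lemma~\ref{lem:bitpar}, but organized around property~(4) of $L(s,\PSY ij)$ rather than around an explicit $\gamma$-factor formula. Concretely, I would first use property~(4): the local parameters at infinity of $L(s, \PSY ij)$ form the multiset $S_{ij}=\{\kappa+\nu\}$, where $\kappa,\nu$ range (with multiplicity) over the local parameters at infinity of $L(s,\Sym^i E_1)$ and $L(s,\Sym^j E_2)$ listed in Section~\ref{kittypurry}. Splitting each complex $\Gamma$-factor via the duplication formula $\Gamma_\CC(s+\mu)=\Gamma_\RR(s+\mu)\Gamma_\RR(s+\mu+1)$ (as in Lemma~\ref{lem:bitpar}), one obtains, up to a nonzero multiplicative constant,
\[
\gamma_{i\otimes j}(s) = \prod_{\mu\in S_{ij}}\Gamma_\RR(s+\mu) = \pi^{-\frac12\left(d_{ij}s+\sum_{\mu}\mu\right)}\prod_{\mu\in S_{ij}}\Gamma\!\left(\tfrac{s+\mu}{2}\right),
\]
and hence the exact identity
\[
\dgij(s) = -\frac{d_{ij}}{2}\log\pi + \frac12\sum_{\mu\in S_{ij}}\frac{\Gamma'}{\Gamma}\!\left(\frac{s+\mu}{2}\right).
\]

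Next, since $0\le i,j\le 2$, the multiset $S_{ij}$ depends only on $\{i,j\}$, so there are only six cases, in each of which $S_{ij}$ is an explicit multiset of size $d_{ij}=(i+1)(j+1)$; e.g.\ $S_{0\otimes 0}=\{0\}$, $S_{1\otimes 1}=\{1,2,2,3\}$, and $S_{2\otimes 2}=\{2,2,2,2,3,3,3,3,4\}$. For $s$ in the relevant range and each $\mu\in S_{ij}$, I would bound $\bigl|\tfrac{\Gamma'}{\Gamma}(\tfrac{s+\mu}{2})\bigr|$ by Lemma~\ref{arouse} when $\tfrac{s+\mu}{2}\ge 1$, and otherwise apply the recurrence $\tfrac{\Gamma'}{\Gamma}(z+1)=\tfrac1z+\tfrac{\Gamma'}{\Gamma}(z)$ once --- twice only when $(i,j)=(0,0)$ and $-1/4<s<0$, where $\mu=0$ --- to push the argument into $[1,\infty)$ before invoking Lemma~\ref{arouse}. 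The leftover terms $\tfrac1z$ generated by these shifts all have $|z|$ bounded below, since $-1/4<s<0$ (resp.\ $s\ge 1$) keeps the arguments away from $0$; the sole exception is the term $\tfrac{1}{s/2}$ arising in the case $(i,j)=(0,0)$, which after the overall factor $\tfrac12$ is precisely the $\tfrac{e_{i+j}}{|s|}$ in the statement. Finally, having replaced each digamma term by $\tfrac{11}{3}+\log(\,\cdot\,+1)$, I would combine the at most $d_{ij}$ logarithms using concavity of $\log$ (i.e.\ AM--GM) into a single term $d_{ij}\log(\tfrac s2+c)$, and check that $c\le 3$ in every case; here there is essentially no slack to lose, since the largest shifted argument, produced by $\mu=4$ in the case $(2,2)$, already equals $\tfrac s2+3$.

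What remains is to carry out the finitely many numerical checks and confirm that, after collecting the bounded leftover terms and the $\tfrac{11}{3}$-contributions, the residual constant is $\le 5$ for $-1/4<s<0$ and $\le 1$ for $s\ge 1$ in all six cases. I expect this bookkeeping to be the main obstacle, and the binding cases to be the low-degree ones, where the leftover $\tfrac1z$ terms are not absorbed by a large degree. For instance, when $(i,j)=(0,1)$ and $-1/4<s<0$, shifting the parameter $\mu=\tfrac12$ produces $\tfrac{2}{s+1/2}$, which can be nearly $8$; halved, it is nearly $4$, and together with the other contributions it stays just below $5$ --- which is why the stated constant is $5$ and not smaller. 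Similarly, the case $(i,j)=(0,0)$ at $s=1$ forces the constant $1$ in the second bound. Once these checks are in place, both displayed inequalities follow at once.
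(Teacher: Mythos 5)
Your proposal is correct and takes essentially the same approach as the paper's proof: expand $\gamma_{i\otimes j}$ into a product of $\Gamma_\RR$ factors indexed by the local parameters $\kappa+\nu$, log-differentiate, use the recurrence \eqref{eq:starshipsyo} to shift any arguments below $1$ before invoking Lemma~\ref{arouse}, and finally combine the resulting logarithms via AM--GM, with the $e_{i+j}/|s|$ term isolated from the double shift in the $(0,0)$ case. The paper states this tersely by deferring to ``imitating the argument used to prove Lemma~\ref{lem:bitpar}''; your version just supplies the case-by-case bookkeeping explicitly, correctly identifying the near-binding $(0,1)$ case and why $c=3$ is tight at $\mu=4$.
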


\begin{proof}
Recall the degree-$d_{ij}$ $\Gamma$-factor $\gamma_{i\otimes j}(s)$ is given by
\[ \dgij(s) = \prod_{\kappa, \nu} \pi^{-s/2} \Gamma\left( \frac{s+\kappa+\nu}{2} \right). \]
From our description of the local parameters at infinity for the symmetric power $L$-functions, we can use point (4) above to deduce that the local parameters at infinity for $L(s, \PSY{i}{j})$, where $0 \leq i,j \leq 2$, are nonnegative and bounded above by $4$. 

The lemma then follows readily by logarithmically differentiating
and imitating the argument used to prove Lemma~\ref{lem:bitpar}.
The term $5 + \frac{e_{i+j}}{|s|}$ in the first bound and the term $1$ in the second bound arises from having to shift some of the terms in the $\Gamma$-factor $\gamma_{i\otimes j}(s)$ so that their argument has real part at least $1$, in which case Lemma~\ref{arouse} can be applied. The factor of $d_{ij}$ results from applying Lemma~\ref{arouse} to each term in the degree-$d_{ij}$ $\Gamma$-factor $\gamma_{i\otimes j}(s)$.
The constant $3$ in the argument of the logarithm arises from dividing this bound of $4$ by $2$ and adding $1$. 
\end{proof}

In what follows, we put $\tau \defeq \frac{11}{3} + \log \pi$ for convenience, as this constant recurs often.
\section{Main Ideas for the Proof of Theorem~\ref{thm:main}}\label{proof1}

In this section, we provide a detailed discussion of the quantities that need to be estimated in order to prove Theorem~\ref{thm:main}. The method we employ is based in part on the work of Bach and Sorenson on computing explicit bounds for primes in residue classes (see~\cite{bach}).

\subsection{Initial Setup of the Proof}\label{coldasice}
Let $E_1$ and $E_2$ be two nonisogenous, non-CM, semistable elliptic curves over $\QQ$ with conductors $N_{E_1}$ and $N_{E_2}$, respectively, and let $N = N_{E_1} \cdot N_{E_2}$. Recall from Section~\ref{intro} that for each $i \in \{1, 2\}$ and every prime $p$, we can associate a Sato-Tate angle $\theta_{i,p} \in [0, \pi]$ to the curve $E_i$, such that the trace of Frobenius at $p$ is expressible as $a_{E_i}(p) = 2\sqrt{p}\cos\theta_{i,p}$.

Now, consider the polynomials $f_1(t)$ and $f_2(t)$, defined in terms of the Chebyshev polynomials $U_i(x)$ of the second kind (see Section~\ref{kittypurry} for the definition) by
\begin{align*}
	f_1(t) 
    &= U_0(t) + 2U_1(t) + U_2(t) = 4t(t+1),
    \text{ and} \\ 
	f_2(t) 
    &= f_1(- t) = U_0(t) - 2U_1(t) + U_2(t) = 4t(t-1).
\end{align*}
We introduce a sum $\SWAG$, which is defined in terms of the polynomials $f_i$ evaluated at $\cos \theta_{i,p}$:
\begin{equation}\label{eq:feds}
	\SWAG \defeq
	\sum_{\substack{p \leq x \\ p \nmid N}}\, (\log p) f_1(\cos \theta_{1,p})f_2(\cos \theta_{2,p}) \cdot \left( p/x\right)^a \log\left(x/p\right),
\end{equation}
where $a \in (0, 1/4]$ is a parameter whose value we will specify later.
Observe that we have $f_1(\cos\theta_{1,p})f_2(\cos\theta_{2,p}) > 0$ precisely when $\cos\theta_{1,p}\cos\theta_{2,p} <0$. It follows that if the sum $\SWAG$ is strictly positive, then there must be a prime $p \le x$ such that $a_{E_1}(p)a_{E_2}(p)<0$. Thus, to prove Theorem~\ref{thm:main}, it suffices to pick $x$ in such a way that $\SWAG > 0$.

In what follows, it will be convenient to express $\SWAG$ in terms of the von Mangoldt-type functions $\Lambda_{i\otimes j}(n)$. Consider the function $\Lt(n)$ defined as follows: if $n$ is not a prime power, then $\Lt(n) \defeq 0$, and if $n = p^m$, then $\Lt(n)$ is given by
\begin{align*}
	\Lt(n) &\defeq (\log p)f_1(\cos m\theta_{p,1})f_2(\cos m\theta_{p,2}) \\
	&= \Lambda_{0 \otimes 0}(n) + 2\Lambda_{1 \otimes 0}(n) - 2\Lambda_{0 \otimes 1}(n) + \Lambda_{2 \otimes 0}(n) + \Lambda_{0 \otimes 2}(n) - \\
    & \quad\,\, 4 \Lambda_{1 \otimes 1}(n) + 2\Lambda_{1 \otimes 2}(n)-2\Lambda_{2 \otimes 1}(n) + \Lambda_{2 \otimes 2}(n),
\end{align*}
where we recall from Section~\ref{defs} that $\Lij(n)$ is the $n^\mathrm{th}$ coefficient of the Dirichlet series for the logarithmic derivative of $L(s, \PSY{i}{j})$. Writing $c_{ij}$ for the coefficient of $\Lij(n)$ in the above expression for $\Lt(n)$, we have that
\[
	\SWAG = \sum_{\substack{p \leq x \\ p \nmid N}}
	\Lt(p) \cdot \left( p/x\right)^a \log\left(x/p\right).
\]

\subsection{Summing over All Prime Powers}\label{pp}
In order to apply the theory of symmetric power $L$-functions of an elliptic curve to estimate $\SWAG$, we must approximate $\SWAG$ with a sum over all prime powers less than or equal to $x$. To this end, consider the sums $\SWAG'$ and $\SWAG''$ given by
\begin{equation}\label{eq:speedy}
	\SWAG' \defeq \sum_{p \leq x}
	\Lt(p) \cdot \left( p/x \right)^a \log\left(x/p\right) \quad \text{and} \quad \SWAG'' \defeq
	\sum_{n \leq x} \Lt(n) \cdot \left( n/x \right)^a \log\left(x/n\right).
\end{equation}
We will first compute a bound on the difference $\SWAG' - \SWAG$. Since $|f_1(t)| \leq 8$ and $|f_2(t)| \leq 8$ for $t \in [-1,1]$, we have that $\left|\Lt(p^m)\right| \leq 64\log p$ for all prime powers $p^m$. So, we find that
$$\left|\SWAG' - \SWAG\right| \leq \sum_{\substack{p \leq x\\p \mid N}}\left|\Lt(p)\right| \cdot \left( p/x \right)^a \log\left(x/p\right) \leq  \sum_{p \mid N} 64\log p \cdot \log x \leq 64 \log N \log x.$$
We will next compute a bound on the difference $\SWAG'' - \SWAG'$. Observe that we have 
\[ \left|\SWAG'' - \SWAG'\right| \leq \sum_{\substack{p^m \leq x\\m \geq 2}}\left|\Lt(p^m)\right| \cdot \left( p^m/x \right)^a \log\left(x/p^m\right) \leq \sum_{\substack{p^m \leq x\\m \geq 2}}64 \log p \cdot \log\left(x/p^m\right). \]
It then follows readily from \cite{bach}*{(4.2)} that we have
\[ \left|\SWAG'' - \SWAG'\right| \leq 64 \left( 2.002\sqrt x + 4x^{1/3} \right). \]
Combining the above bounds, we find that
\begin{equation}
	\left|\SWAG'' - \SWAG\right| \leq 64\left( 2.002\sqrt x + 4x^{1/3} + \log N \log x \right).
	\label{eq:schrodie}
\end{equation}
We will now bound $\SWAG''$ to obtain an estimate for $\SWAG$.
\subsection{Reducing $\SWAG''$ to an Integral}
\label{subsec:main_est} Recall that by the definitions of $\Lt(n)$ and $\SWAG''$, \mbox{we have}
\[ \SWAG'' = \sum_{0\le i,j \le 2} c_{ij} \sum_{n \le x} \Lambda_{i \otimes j}(n) \cdot (n/x)^a \log (x/n).  \]
Taking $\SWAG_{ij}$ to denote the inner sum in the above equality, we can express $\SWAG_{ij}$ in the usual way (see Lemma 4.2 of~\cite{bachagain}) as a contour integral:
\begin{equation}\label{eq:ints}
\SWAG_{ij} = \frac{1}{2\pi i}\int_{2-i\infty}^{2+i\infty} \frac{x^s}{(s+a)^2} \left( -\frac{L'}{L}(s, \PSY ij) \right) \; ds.
\end{equation}
To evaluate the integral~\eqref{eq:ints}, we will utilize the Residue Theorem. Pick a large number $T > 0$ in such a way that $T$ does not coincide with the ordinate of a nontrivial zero of $L(s, \PSY{i}{j})$, and let $U > 0$ be a large number such that $-U$ does not coincide with any trivial zeros of $L(s, \PSY{i}{j})$.
Taking $\SWAG_{ij}(T)$ to be the truncation of $\SWAG_{ij}$ up to height $T$ and letting $R_{ij}$ be the sum over all residues of the integrand in \eqref{eq:ints} within the rectangle whose vertices are $2 \pm iT$
and $-U \pm iT$, we have by the Residue Theorem that
\[
	\SWAG_{ij}(T)
	= R_{ij} + \frac{1}{2\pi i}\int_{\Gamma_{T,U}} \frac{x^s}{(s+a)^2} \left( -\frac{L'}{L}(s, \PSY ij) \right) \; ds,
\]
where $\Gamma_{T,U}$ denotes the path defined by
\[
	2-iT
	\;\longrightarrow\; -U -iT
	\;\longrightarrow\; -U +iT
	\;\longrightarrow\; 2+iT.
\]
Observe that by the absolute convergence of the above integral, we have the following results: 
\begin{enumerate}
\item The integral over the horizontal legs of the contour can be made arbitrarily small by taking $T$ sufficiently large.
\item The integral over the vertical leg of the contour can be made arbitrarily small by taking $U$ sufficiently large. 
\end{enumerate}
Thus, we take $T \to \infty $ and $U \to \infty$, in which case the integral over the contour $\Gamma_{T,U}$ vanishes. The remainder of this section is devoted to estimating $R_{ij}$, which is now the sum of residues due to all zeros of $L(s, \PSY{i}{j})$ in the half-plane defined by $\sigma < 2$. 

\subsection{Estimating $R_{ij}$}\label{darkhorse}
Recall from Section~\ref{defs} that $L(s, \PSY{i}{j})$ has a pole at $s = 1$ if and only if $i = j = 0$. Thus, the possible pole at $s = 1$ contributes a residue of
\begin{equation}
	e_{i+j}\frac{x}{(1+a)^2}
	\label{eq:puru}
\end{equation}
The above residue gives rise to the ``main term'' in our estimate of $\SWAG''$. We next consider the residues that correspond to the zeros, both trivial and nontrivial, of $L(s, \PSY{i}{j})$.

\subsubsection{Residues at Trivial Zeros}\label{meanttofly}
Let $\zt_{ij}$ denote the collection of trivial zeros of the $L$-function $L(s, \PSY{i}{j})$. The sum of residues due to the trivial zeros is then given as follows:
$$\left|\sum_{\rho \in \zt_{ij}} \frac{x^\rho}{(\rho + a)^2}\right| = \sum_{\rho \in \zt_{ij}} \frac{x^\rho}{(\rho+a)^2},$$
where equality holds because $\zt_{ij} \subset \RR$. It can be deduced from our description of the local parameters at $\infty$ for the symmetric-power $L$-functions (see Section~\ref{kittypurry}) that $0 \in \zt_{ij}$ if and only if $i = j = 0$ and that $\max \big(\zt_{ij}\big) \leq -1/2$ if $i + j > 0$. We can therefore write the sum of residues due to trivial zeros as follows:
$$\frac{e_{i+j}}{a^2} + \sum_{\substack{\rho \in \zt_{ij} \\ \rho \neq 0}}\frac{x^\rho}{(\rho + a)^2} \leq \frac{e_{i+j}}{a^2} + (1-e_{i+j})d_{ij} \frac{x^{-1/2}}{(-1/2+a)^2} + d_{ij}\sum_{k = 1}^\infty \frac{x^{-2k}}{(2k-a)^2}.$$
Since $a \leq 1/4$, we can estimate the sum on the right-hand-side of the above inequality using the Cauchy-Schwarz Inequality as follows:
$$\sum_{k=1}^\infty \frac{x^{-2k}}{(2k-a)^2} \leq 16\sum_{k=1}^\infty \frac{x^{-2k}}{(8k-1)^2} \leq \frac{0.337}{\sqrt{x^4 - 1}}.$$
 Combining our results, we find that the residues due to nontrivial zeros are bounded by
$$\leq \frac{e_{i+j}}{a^2} + \frac{16(1-e_{i+j})d_{ij}}{\sqrt{x}} +
\frac{0.337d_{ij}}{\sqrt{x^4 - 1}}.$$
In what follows, we will take $\Upsilon = 0.337$ for convenience. Summing over $0 \leq i,j \leq 2$, we find that the residues due to nontrivial zeros contribute a total of
\begin{equation}
\frac{1}{a^2} + \frac{63 \cdot 16}{\sqrt{x}} + \frac{64\Upsilon}{\sqrt{x^4 - 1}}.
\label{eq:monster}
\end{equation}

\subsubsection{Residues at Nontrivial Zeros}
All of the nontrivial zeros lie on the line $\sigma = 1/2$ because we are assuming that GRH holds. The sum of these residues is bounded in absolute value as follows, where we write $\zn_{ij}$ for the collection of nontrivial zeros of $L(s, \PSY{i}{j})$:
\begin{equation}\label{eq:rasam}
	\left\lvert  \sum_{\rho \in \zn_{ij}} \frac{x^\rho}{(\rho+a)^2} \right\rvert
	\le \sqrt{x} \cdot \sum_{\rho \in \zn_{ij}} \frac{1}{\left\lvert \rho+a \right\rvert^2}.
\end{equation}
We now follow the method of proof used in Lemma 4.6 of \cite{bach}. Since the completed $L$-function $\Lambda(s, \PSY{i}{j})$ defined in~\eqref{eq:compsies} is entire, it can be represented as a Hadamard product.
Setting this Hadamard product equal to the right-hand-side of~\eqref{eq:compsies}, logarithmically differentiating, and applying self-duality yields that
\begin{eqnarray*}
-\dLL(1+a) & = & \half\log\Nij + \frac {e_{i+j}}{a} + \frac {e_{i+j}}{a+1} + \dgij(1+a) + \label{h1}\\
& & B_{ij} - \sum_{\rho \in \zn_{ij}} \left(\frac{1}{\rho} + \frac{1}{1 + a - \rho}\right), \nonumber
\end{eqnarray*}
where $B_{ij}$ is such that $\Re B_{ij} = \Re \sum_\rho \frac{1}{\rho}$. From this, along with the self-duality of $L(s, \PSY{i}{j})$, we deduce from the above equality that
\begin{eqnarray}
-\dLL(1+a) & = & \half\log\Nij + \frac {e_{i+j}}{a} + \frac {e_{i+j}}{a+1} + \dgij(1+a) - \label{h2}\\
& & \frac{1}{2} \sum_{\rho \in \zn_{ij}} \left(\frac{1}{1 + a - \rho} + \frac{1}{1 + a - \overline{\rho}}\right).\nonumber
\end{eqnarray}
Finally, observe that we can bound the logarithmic derivative of $L(s, \PSY{i}{j})$ in terms of $\zeta(s)$ as follows:
\begin{equation}\label{h3}
\left|\dLL(1+a)\right|
\le \left\lvert \sum_{n \ge 1} \Lambda_{i \otimes j}(n) n^{-s} \right\rvert
\le d_{ij}\left|\frac{\zeta'}{\zeta}(1+a)\right|.
\end{equation}
Then, using the above results as well as the identity
\begin{equation}\label{h4}
	\sum_{\rho \in \zn_{ij}} \frac{1}{|\rho+a|^2}
	= \frac{1}{2a+1} \sum_{\rho \in \zn_{ij}} \left( \frac{1}{1+a-\rho} + \frac{1}{1+a-\overline\rho} \right),
\end{equation}
we can estimate the sum over the nontrivial zeros on the right-hand-side of~\eqref{eq:rasam}. This estimation is carried out explicitly in Section~\ref{subsec:critical}. Summing over $0 \leq i,j \leq 2$, we find that the contribution due to the residues at nontrivial zeros is bounded in absolute value by
\[
	\sum_{0 \le i,j \le 2} |c_{ij}| \sum_{\rho \in \zn_{ij}} \frac{\sqrt x}{|\rho+a|^2} <
	\left( \frac{32}{2a+1} \log N + A_2 \right) \sqrt x,
\]
where $A_2 \asymp 1/a$ is a constant depending only on $a$. We now estimate the residue due to the double pole at $s = -a$.

\subsubsection{Residue at $s = -a$.} From the Cauchy Integral Formula, we find that after summing over $0 \leq i,j \leq 2$, the residue at $s = -a$ contributes a total of
$$ < x^{-a}
	\left( (\log x)\left( \frac{32(a+2)}{2a+1}\log N + A_6 \right)
	+ \frac{32}{2a+1}\log N + A_4 \right),$$
where $A_4, A_6 \asymp 1/a$ are constants depending only on $a$. We defer the explicit details of the estimate to Section~\ref{breakfree}.
\section{Explicit Estimates for Theorem~\ref{thm:main}}
\label{sec:pain}

In this section, we provide detailed proofs of the explicit estimates that are required to complete the proof of Theorem~\ref{thm:main} (see Sections~\ref{darkhorse}). In what follows, we take our parameter $a$ with $0 < a < 1/4$.

\subsection{Explicit Sum over Nontrivial Zeros}
\label{subsec:critical}
Applying the results~\eqref{h2},~\eqref{h3}, and~\eqref{h4} stated in Section~\ref{darkhorse},  we can estimate the sum on the right-hand-side of~\eqref{eq:rasam} in the following way:
\begin{align*}
	\sum_{\rho \in \zn_{ij}} \frac{1}{|\rho+a|^2}
    & = \frac{1}{2a+1} \sum_{\rho \in \zn_{ij}} \left( \frac{1}{1+a-\rho} + \frac{1}{1+a-\overline\rho} \right)\\
	&\le \frac{2}{2a+1} \left( 
    	\half\log\Nij + \frac {e_{i+j}}{a} + \frac{e_{i+j}}{a+1} + \left|\dgij(1+a)\right|
        + d_{ij} \left\lvert \frac{\zeta'}{\zeta}(1+a) \right\rvert \right). \\
	\intertext{Then along with our digamma estimate from Lemma~\ref{lem:digamma} gives}
    &\le \frac{2}{2a+1} \left( \half\log\Nij + \frac {e_{i+j}}{a} + \frac{e_{i+j}}{a+1}
    	+ d_{ij}A_1 + 1\right),
\end{align*}
where for convenience, we define the constant $A_1$ by
\[ 
	A_1 \defeq \half\left(\tau + \log \left( \frac{a}{2}+ \frac{7}{2} \right) \right)
    + \frac 1a + \gamma.
\]
(Here, we have also used the fact that
$ \left\lvert \frac{\zeta'}{\zeta}(1+a) \right\rvert < \frac 1a + \gamma$
where $\gamma < 0.57722$ is the Euler-Mascheroni constant.)
Combining our results, we conclude that the contribution due to nontrivial zeros can be estimated is
\[ 
	\leq \sqrt{x} \cdot \frac{2}{2a+1} \left( \half\log\Nij
	+ \frac {e_{i+j}}{a} + \frac{e_{i+j}}{a+1} + d_{ij}A_1\right).
\]
Now, we sum the above bound over all $i,j \in \{0,1,2\}$. To do so, first notice that we have the following estimates/equalities:
\begin{align*}
	\sum_{0 \le i,j \le 2} |c_{ij}| \cdot \log \Nij
	&\leq  \sum_{0 \le i,j \le 2} |c_{ij}| \cdot \left( i(j+1) \log N_{E_1} + j(i+1) \log N_{E_2} \right) =  32\log N, \\
    \sum_{0 \le i,j \le 2} |c_{ij}| \cdot d_{ij}A_1 &= A_1 \sum_{0 \le i,j \le 2} |c_{ij}| \cdot (i+1)(j+1) = 64A_1, \text{ and}\\
    \sum_{0 \le i,j \le 2} |c_{ij}| \cdot 
    \left( \frac {e_{i+j}}{a} + \frac{e_{i+j}}{a+1} \right)
	&= \frac 1a + \frac1{a+1} = \frac{2a+1}{a(a+1)}.
\end{align*}
Given these results, we conclude that the contribution due to nontrivial zeros is bounded by
\begin{equation}
	\sum_{0 \le i,j \le 2} |c_{ij}|
	\sum_{\rho \in \zn_{ij}} \frac{\sqrt x}{|\rho+a|^2}
	\leq \left( \frac{32}{2a+1} \log N + A_2 \right) \sqrt x,
	\label{eq:blame}
\end{equation}
where for convenience we put
\[ A_2 \defeq \frac{2}{2a+1} \left( 64A_1 + \frac{2a+1}{a(a+1)} \right)
	= \frac{128}{2a+1} A_1 + \frac{2}{a(a+1)}. \]

\subsection{The Residue at $s = -a$}\label{breakfree}
For convenience, let $L_{i\otimes j}(s) \defeq L(s,  \PSY ij)$. We will first need to estimate $\left(-\dLL\right)'(s)$ at $s=-a$.
Recall that we can apply (5.24) from~\cite{iwaniec} to obtain the following equality:
\begin{eqnarray}
	-\dLL(s) &=& -\dLL(2) + \dgij(s)-\dgij(2)
	+ e_{i+j} \left( \frac{1}{s} + \frac{1}{s-1} - \frac32 \right) + \label{eq:dLL} \\
	& &  \sum_{\rho \in \zn_{ij}} \left( \frac{1}{2-\rho}-\frac{1}{s-\rho} \right).  \nonumber
\end{eqnarray}

\begin{lemma}
	\label{lem:difflogdiff}
	For any $a \in (0,1/4]$, we have that
	\[ \sum_{0 \le i,j \le 2} |c_{ij}| \left| \left( -\dLL\right)'(-a) \right|
		\le \frac{32}{2a+1}\log N + A_4 \]
      where $A_4 \asymp 1/a$ is a constant depending only on $a$.
\end{lemma}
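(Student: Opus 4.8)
The plan is to differentiate the explicit formula~\eqref{eq:dLL} in $s$, evaluate at $s=-a$, estimate the three resulting terms individually, and then sum over $0\le i,j\le 2$ against the weights $|c_{ij}|$. Since $\sum_{\rho}|\rho|^{-2}<\infty$, the sum over nontrivial zeros in~\eqref{eq:dLL} may be differentiated term by term, giving
\[
	\left(-\dLL\right)'(s) = \left(\dgij\right)'(s) + e_{i+j}\left(-\frac{1}{s^2}-\frac{1}{(s-1)^2}\right) + \sum_{\rho\in\zn_{ij}}\frac{1}{(s-\rho)^2},
\]
and hence
\[
	\left(-\dLL\right)'(-a) = \left(\dgij\right)'(-a) + e_{i+j}\left(-\frac{1}{a^2}-\frac{1}{(1+a)^2}\right) + \sum_{\rho\in\zn_{ij}}\frac{1}{(a+\rho)^2}.
\]
For the sum over nontrivial zeros, the triangle inequality and GRH give $\bigl|\sum_{\rho}(a+\rho)^{-2}\bigr|\le\sum_{\rho}|\rho+a|^{-2}$, which is exactly the quantity estimated in Section~\ref{subsec:critical}; dividing~\eqref{eq:blame} through by $\sqrt{x}$ shows that $\sum_{0\le i,j\le2}|c_{ij}|\sum_{\rho\in\zn_{ij}}|\rho+a|^{-2}\le\tfrac{32}{2a+1}\log N+A_2$ with $A_2\asymp 1/a$. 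This already supplies the entire main term of the lemma, so it remains only to check that all the other terms contribute $O(1/a)$.

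The crux is the $\Gamma$-factor term $\left(\dgij\right)'(-a)$. Writing $\gij(s)=\prod_{\kappa,\nu}\pi^{-s/2}\Gamma\bigl(\tfrac{s+\kappa+\nu}{2}\bigr)$, where $\kappa$ runs over the local parameters at infinity of $L(s,\Sym^i E_1)$ and $\nu$ over those of $L(s,\Sym^j E_2)$, we have $\left(\dgij\right)'(s)=\tfrac14\sum_{\kappa,\nu}\ddG\bigl(\tfrac{s+\kappa+\nu}{2}\bigr)$. From the description of the local parameters at infinity in Section~\ref{kittypurry}, the value $\kappa=\nu=0$ occurs only for $(i,j)=(0,0)$, and for every other pair with $0\le i,j\le2$ one has $\kappa+\nu\ge\tfrac12$, so each argument $\tfrac{-a+\kappa+\nu}{2}$ lies in $[\tfrac18,\infty)$. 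For $(i,j)\neq(0,0)$ I would shift each such argument to real part $\ge1$ using the recurrence $\ddG(z)=z^{-2}+\ddG(z+1)$ (obtained by differentiating~\eqref{eq:starshipsyo}) and then apply the elementary bound $0<\ddG(z)\le\pi^2/6$ for real $z\ge1$ (both following from $\ddG(z)=\sum_{n\ge0}(z+n)^{-2}$), obtaining $\bigl|\left(\dgij\right)'(-a)\bigr|\le C$ for an absolute constant $C$, uniformly in $a$. For $(i,j)=(0,0)$ there is a single $\Gamma$-factor with $\kappa+\nu=0$, so $\left(\dgij\right)'(-a)=\tfrac14\ddG(-a/2)=a^{-2}+\tfrac14\ddG(1-a/2)$; the leading $a^{-2}$ here cancels exactly against the $-a^{-2}$ from the $e_{i+j}$-term, and since $1-a/2\in[\tfrac78,1)$ the surviving remainder $\tfrac14\ddG(1-a/2)-(1+a)^{-2}$ is $O(1)$.

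Putting the pieces together, I would sum over $0\le i,j\le2$ against $|c_{ij}|$: the nontrivial-zero contribution is at most $\tfrac{32}{2a+1}\log N+A_2$, while every remaining contribution is bounded by an absolute constant times $\sum_{0\le i,j\le2}|c_{ij}|=64$. Collecting the $O(1/a)$ and $O(1)$ leftovers into a single constant $A_4\defeq A_2+O(1)$, which satisfies $A_4\asymp1/a$ because $A_2\asymp1/a$, yields the claimed bound.

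I expect the main obstacle to be precisely the $a^{-2}$ cancellation in the $(i,j)=(0,0)$ case: bounding $\bigl|\left(\dgij\right)'(-a)\bigr|$ and the pole term $e_{i+j}\bigl(-a^{-2}-(1+a)^{-2}\bigr)$ separately would only give $A_4\asymp a^{-2}$, which is too weak, so one must keep the $(0,0)$ $\Gamma$-factor derivative and the $\tfrac1s$ pole of~\eqref{eq:dLL} together and exploit that they agree to leading order at $s=-a$. A secondary technical point will be producing a fully explicit bound for the trigamma function $\ddG$ on $\{\Re s\ge1\}$ and on the short real interval $[\tfrac18,1)$ after a single shift — the analogue for $\left(-\dLL\right)'$ of the role Lemma~\ref{arouse} plays in the proof of Lemma~\ref{lem:bitpar}.
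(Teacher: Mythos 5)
Your proposal is correct and, interestingly, it actually proves something the paper's own argument does not. Both you and the paper differentiate~\eqref{eq:dLL}, recognize that $\sum_{\rho}(a+\rho)^{-2}$ is exactly what was estimated in Section~\ref{subsec:critical} (dividing~\eqref{eq:blame} by $\sqrt x$, supplying the entire main term), and then analyze $\left(\dgij\right)'(-a)=\tfrac14\sum_{\kappa,\nu}\ddG\bigl(\tfrac{-a+\kappa+\nu}{2}\bigr)$, shifting the one argument with $\kappa+\nu=0$ (the $(i,j)=(0,0)$ case) via $\ddG(-a/2)=\tfrac{4}{a^2}+\ddG(1-a/2)$. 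The divergence is in what happens to the $a^{-2}$ that the shift produces. You observe that it cancels exactly against the $-e_{i+j}/a^2$ from the differentiated pole term, and you correctly flag this as the crux of getting $A_4\asymp1/a$. The paper instead takes absolute values on the three pieces separately and defines $A_4\defeq\Psi/4+\tfrac{2}{a^2}+\tfrac{1}{(1+a)^2}+A_2$, which is in fact of order $1/a^2$ — so the paper's own proof does not support the lemma's stated asymptotic $A_4\asymp1/a$, although this discrepancy is harmless for Theorem~\ref{thm:main} because after the eventual choice $a\asymp(\log N)^{-1/3}$ the resulting $A_4\asymp(\log N)^{2/3}$ is still absorbed into the $C_3\sqrt x\log x$ error term. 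Your version, which keeps the $(0,0)$ $\Gamma$-factor derivative and the $1/s$-pole contribution together before taking absolute values, is the cleaner route and is the one that actually delivers the advertised $A_4\asymp1/a$.

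One secondary remark: for $(i,j)\ne(0,0)$ you bound $\ddG$ by $\pi^2/6$ after shifting the argument up to $\ge1$; do remember that the shift itself contributes $z^{-2}$ terms (with $z$ as small as $1/8$, contributing $64$), but since these no longer depend on $a$ once all arguments are $\ge1/8$, they indeed amount to an absolute constant and your reasoning goes through. You would of course need to make this $C$ explicit to get a numerical $A_4$ comparable to the paper's, but the asymptotic statement you set out to prove is established.
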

\begin{proof}
	We begin by differentiating \eqref{eq:dLL} to obtain
	\[ \left( -\dLL\right)'(-a)
		= \left(\dgij\right)'(-a)
		-e_{i+j}\left[ \frac{1}{a^2} + \frac{1}{(a+1)^2} \right]
		+ \sum_{\rho \in \zn_{ij}} \frac{1}{(a+\rho)^2}.  \]
	It clearly suffices to estimate the first term in the above equality, which is given by the following sum:
	\begin{equation}\label{Benfordz} \frac14 \sum_\kappa \ddG \left( \frac{-a+\kappa}{2} \right) \end{equation}
	where $\kappa$ ranges over the local roots of $L(s, \PSY ij)$. We recall that $\ddG(s)$ is decreasing for $s \in (0,\infty)$.
	Moreover, by differentiating \eqref{eq:starshipsyo}, we have that
	\[ \ddG\left(-\frac a2\right) = \frac{4}{a^2} + \ddG\left(1-\frac a2\right). \]
    To obtain an estimate on~\eqref{Benfordz}, we can use the above functional equation to rewrite the term corresponding $\kappa = 0$ so that it has positive argument. Then, we can use the monotonicity of $\ddG(s)$ on $s \in (0, \infty)$ and the assumption that $a \leq 1/4$ to upper bound the resulting $\ddG$ terms, all of which have positive argument.
As a representative example of how this estimate is performed, the term corresponding to $\kappa = 0$ is bounded as follows:
$$\ddG\left(1-\frac a2\right) \leq \ddG\left(1-\frac 18\right) \leq 2.006.$$
    Using this procedure to estimate~\eqref{Benfordz} and summing the result over $0 \leq i,j \leq 2$ yields the following bound:
	\begin{align*}
		\sum_{0\le i,j \le 2} |c_{ij}| \sum_\kappa \ddG\left( \frac{-a+\kappa}{2}  \right)
		&\le \frac{4}{a^2} + 426.875.
	\end{align*}
    For brevity, we heretofore let $\Psi \defeq 426.875$; an explicit computation of $\Psi$ is performed in the ancillary program file (see Appendix~\ref{tabwars} regarding this file).
	We then have that
	\[ \sum_{0 \le i,j \le 2} |c_{ij}| \left| \left( -\dLL\right)'(-a) \right|
		\le \frac{1}{4} \left( \frac{4}{a^2} + \Psi \right)
		+ \left[ \frac{1}{a^2} + \frac{1}{(a+1)^2} \right]
		+ \frac{32}{2a+1} \log N + A_2.  \]
	Taking $A_4 \defeq \Psi/4 + \frac{2}{a^2} + \frac{1}{(a+1)^2} + A_2$ yields the lemma.
\end{proof}
We now make the following two estimates:
\begin{lemma}\label{lem:yomommastacos}
	For any $a \in (0,1/4]$, we have that
	\begin{equation}\label{goharkereagles} \sum_{0 \leq i,j \leq 2} |c_{ij}| \left( -\dLL(-a) \right)
		\le \frac{32(a+2)}{2a+1}\log N + A_6, \end{equation}
	where $A_6 \asymp 1/a$ is a constant depending only on $a$.
\end{lemma}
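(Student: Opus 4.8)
The plan is to evaluate the Hadamard-product identity \eqref{eq:dLL} at $s=-a$ (which is legitimate since $a\in(0,1/4]$ keeps $-a$ away from every zero and the pole) and to bound each of the resulting terms, recycling the estimate on $\sum_{\rho}|\rho+a|^{-2}$ already proved in Section~\ref{subsec:critical}. Setting $s=-a$ in \eqref{eq:dLL} and using $-\tfrac{1}{-a-\rho}=\tfrac{1}{a+\rho}$ gives
\[
-\dLL(-a)=-\dLL(2)+\dgij(-a)-\dgij(2)+e_{i+j}\Big(-\tfrac1a-\tfrac1{a+1}-\tfrac32\Big)+\sum_{\rho\in\zn_{ij}}\Big(\tfrac{1}{2-\rho}+\tfrac{1}{a+\rho}\Big).
\]
The sum over nontrivial zeros is the only term carrying a factor of $\log N$, so the work is concentrated there.

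First I would handle that sum. Since $L(s,\PSY ij)$ is self-dual, $\zn_{ij}$ is closed under conjugation, and the displayed sum (absolutely convergent, with terms $O(|\rho|^{-2})$) is therefore real and equals $\sum_{\rho}\big(\Re\tfrac{1}{2-\rho}+\Re\tfrac{1}{a+\rho}\big)$. Invoking GRH to write $\rho=\tfrac12+i\gamma$, this becomes $\sum_{\rho}\big(\tfrac{3/2}{|2-\rho|^2}+\tfrac{a+1/2}{|a+\rho|^2}\big)$, a sum of positive terms. Because $a\le 1/4$ forces $|2-\rho|^2=\tfrac94+\gamma^2\ge(a+\tfrac12)^2+\gamma^2=|a+\rho|^2$, the whole expression is at most $\big(\tfrac32+(a+\tfrac12)\big)\sum_{\rho}|a+\rho|^{-2}=(a+2)\sum_{\rho}|\rho+a|^{-2}$. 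This merging of the two (individually non-absolutely-convergent) sums into a single multiple of $\sum_{\rho}|\rho+a|^{-2}$ with coefficient exactly $a+2$ is what ultimately produces the factor $(a+2)$ in the statement.

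Next I would dispose of the remaining terms as constants: $|\dLL(2)|\le d_{ij}\,\zzt$ by the argument of \eqref{h3} applied at $s=2$; $|\dgij(-a)|$ and $|\dgij(2)|$ are bounded by the two cases of Lemma~\ref{lem:digamma} (using $-1/4<-a<0$ and $2\ge 1$), contributing a $d_{ij}$-times-bounded piece together with an $e_{i+j}/a$ piece; and $e_{i+j}\big(-\tfrac1a-\tfrac1{a+1}-\tfrac32\big)\le 0$ is harmless for an upper bound. Multiplying through by $|c_{ij}|\ge0$ and summing over $0\le i,j\le 2$, the main term is $(a+2)\sum_{0\le i,j\le2}|c_{ij}|\sum_{\rho}|\rho+a|^{-2}$, which by \eqref{eq:blame} (divided by $\sqrt x$) is at most $\tfrac{32(a+2)}{2a+1}\log N+(a+2)A_2$. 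Every other contribution becomes a constant depending only on $a$, using the same identities $\sum|c_{ij}|=16$, $\sum|c_{ij}|d_{ij}=64$, $\sum|c_{ij}|e_{i+j}=1$ from Section~\ref{subsec:critical}; collecting all of them into a single $A_6$—which is $\asymp 1/a$ on account of the $e_{i+j}/a$, $\zeta'/\zeta$, and $A_2$ pieces—yields \eqref{goharkereagles}.

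The step I expect to be fussiest is not any single inequality but this opening combination: one must pass to real parts so that both zero-sums turn positive, and then observe the clean comparison $|2-\rho|^{-2}\le|\rho+a|^{-2}$ that collapses them onto the quantity already estimated in Section~\ref{subsec:critical}. Once that is in hand, the rest is the familiar bookkeeping of digamma values, $\zeta'/\zeta(2)$, and the $A_1,A_2$ tails seen in Lemmas~\ref{lem:digamma} and~\ref{lem:difflogdiff}, with the precise numerical value of $A_6$ deferred to the ancillary computation.
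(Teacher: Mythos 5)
Your proof is correct and follows essentially the same route as the paper's: evaluate the Hadamard identity \eqref{eq:dLL} at $s=-a$, bound the zero-sum by $(a+2)\sum_{\rho}|\rho+a|^{-2}$ and then invoke \eqref{eq:blame} to extract the $\tfrac{32(a+2)}{2a+1}\log N$ main term, and sweep the remaining pieces (the $\zeta'/\zeta(2)$-type bound for $\dLL(2)$, the two digamma values via Lemma~\ref{lem:digamma}, the pole term) into an $A_6 \asymp 1/a$ constant. Your real-part argument for the comparison
\[
\Re\frac{1}{2-\rho}+\Re\frac{1}{a+\rho}=\frac{3/2}{|2-\rho|^2}+\frac{a+1/2}{|a+\rho|^2}\le\frac{a+2}{|\rho+a|^2}
\]
on GRH is a cleaner justification of the step the paper simply asserts as an inequality, and it is welcome. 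One small caution: you discard the pole contribution $e_{i+j}\big(-\tfrac1a-\tfrac1{a+1}-\tfrac32\big)$ because it is negative, which is legitimate for the lemma exactly as written (no absolute value bars on $-\dLL(-a)$). However, when the lemma is actually invoked in \S4.2 to bound the residue at $s=-a$, what is really needed is an upper bound on $\sum|c_{ij}|\,\bigl|\dLL(-a)\bigr|$ (compare Lemma~\ref{lem:difflogdiff}, which does carry absolute values); in that reading one should not drop the negative term but bound it by $A_5 = \tfrac1a+\tfrac1{1+a}+\tfrac32$, which the paper folds into $A_6$. This changes nothing about the order $A_6\asymp 1/a$, but it is worth retaining $A_5$ so that the bound you produce is the one the final estimate can actually use.
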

\begin{proof}
    Notice that we have the following inequalities:
	\begin{eqnarray*} \left|\sum_{0 \leq i,j \leq 2} |c_{ij}|\sum_{\rho \in \zn_{ij}} \left( \frac{1}{2-\rho}-\frac{1}{-a-\rho} \right)\right|
		& \leq & (a+2)\sum_{0 \leq i,j \leq 2} |c_{ij}|\sum_{\rho \in \zn_{ij}} \frac{1}{\left\lvert \rho+a \right\rvert^2} \\
		& \le & (a+2)\left(\frac{32}{2a+1} \log N + A_2\right),
        \end{eqnarray*}
	where the last estimate follows from the results of Section~\ref{subsec:critical}. Now, applying Lemma~\ref{lem:digamma} to estimate the remaining terms in~\eqref{eq:dLL}, we see that the left-hand-side of~\eqref{goharkereagles} is at most
	\[
		\frac{32(a+2)}{2a+1} \log N +(a+2) A_2
		+ \sum_{0 \leq i,j \leq 2} |c_{ij}| d_{ij} \eta
		+ \sum_{0 \leq i,j \leq 2} |c_{ij}|\left(6+\frac{e_{i+j}}{a}\right)
		+ A_5,
	\]
	where $\eta$ and $A_5$ are defined as follows:
	\[ \eta \defeq \log 12 +\tau+\zzt \quad \text{and} \quad
		A_5 \defeq \frac1a+\frac1{1+a}+\frac32. \]
	Here, the $\log 12 + \tau$ term arises from applying Lemma~\ref{lem:digamma} to estimate the two digamma terms in~\eqref{eq:dLL}, and the $d_{ij}\zzt$ term is an easy estimate for $\left|\dLL(2)\right|$. Finally, if we define $A_6$ by
	\[ A_6 \defeq (a+2) A_2 + 64\eta + 96 + \frac{1}{a} + A_5, \]
	then we obtain the lemma.
\end{proof}

The contribution of the residue at $s=-a$ is given by
\[ \left(\log x \cdot \left( -\dLL(-a) \right) 
	+ \left( -\dLL \right)' (-a)\right)x^{-a}.  \]
Thus, when we sum over all $0 \leq i,j \leq 2$, Lemmas~\ref{lem:difflogdiff} and~\ref{lem:yomommastacos} tell us that the contribution due to the residue at $s=-a$ is bounded by
\[ \le  x^{-a}
	\left( (\log x)\left( \frac{32(a+2)}{2a+1}\log N + A_6 \right)
	+ \frac{32}{2a+1}\log N + A_4 \right). \]
Since $x, a > 0$ we can simplify this to an upper bound of
\begin{equation}
	\left( (\log x)\left( \frac{32(a+2)}{2a+1}\log N + A_6 \right)
	+ \frac{32}{2a+1}\log N + A_4 \right).
	\label{eq:orion}
\end{equation}

\subsection{The Final Estimate}
In this section, we obtain the explicit bound stated in Theorem~\ref{thm:main}.
Collating the main term in \eqref{eq:puru} with \eqref{eq:schrodie}, \eqref{eq:monster}, \eqref{eq:orion}, \eqref{eq:blame}, 
it suffices to find an $x$ such that
\begin{eqnarray*}
	\frac{x}{(1+a)^2} & > & \left( \frac{32}{2a+1} \log N + A_3 \right) \sqrt x + 64\left( 4x^{1/3} + \log N \log x \right) + \\
	& &  \frac{1}{a^2} + \frac{63 \cdot 16}{\sqrt x} + \frac{64\Upsilon}{\sqrt{x^4-1}} + \\
	& &  (\log x)\left( \frac{32(a+2)}{2a+1}\log N + A_6 \right) + \frac{32}{2a+1}\log N + A_4.
\end{eqnarray*}
where for convenience we have defined $A_3\defeq A_2 + 64 \cdot 2.002$. We now select
\[ a = \frac14 \sqrt[3]{\frac{\log 121}{\log N}} = \frac{C_0}{\sqrt[3]{\log N}} \]
where we define $C_0 \defeq \sqrt[3]{\log 121}/4$. It is clear that $a \leq 1/4$, because we have that $N_{E_1}, N_{E_2} \geq 11$, so $N \ge 121$.

Now, we seek to compute $(1+a)^2\left( \frac{32}{2a+1}\log N + A_3 \right)$.
First, we have $\frac{(1+a)^2}{2a+1} \le 1+a^2$,
which takes care of the first part.
As for $(1+a)^2 A_3$, 
we use the fact that $A_1 \asymp 1/a$ in the limit as $a \to 0$ in order to simplify our computations. Specifically, we will isolate the $1/a$ term in $A_1$ and bound the remaining dependence on $a$ using the fact that $0 < a \leq 1/4$.
Notice that we have
\[
	A_1 \le \frac1a + C_1 \quad\text{where}\quad
	C_1 \defeq \half \left(\tau +\log \frac{29}{8} \right) + \gamma.
\]
Next, since we took $A_3 = 64 \cdot 2.002 + A_2$, we have that
\[ A_3 = 64 \cdot 2.002 + \frac{128}{2a+1}A_1 + \frac{2}{a(a+1)}. \]
thus
\[ (1+a)^2 A_3 = 64\cdot 2.002(1+a)^2 + \frac{128(1+a)^2}{2a+1}A_1
+ 2\left( \frac 1a+1 \right). \]
The middle term is bounded by
\[
	\frac{(1+a)^2}{2a+1}A_1
	\le \frac{(1+a)^2}{a(2a+1)}
	+ \frac{(1+a)^2}{2a+1} C_1
	\le \frac 1a + \frac{a}{2a+1}
	+ \frac{(1+a)^2}{2a+1} C_1
	\le \frac 1a + \half + \frac{25}{24} C_1.
\]
Thus, we finally deduce that
\[ (a+1)^2 A_3 \le \frac{130}{a} + C_2, \]
where $C_2$ is defined as follows:
\[ C_2 \defeq 64 \cdot 2.002 \cdot \frac{25}{16} + 64 + 128\cdot \frac{25}{24} C_1 + 2.  \]

Therefore now suffices to find $x$ such that the following inequality holds:
\begin{eqnarray}
	x &>&  \left( 32\log N + 32a^2\log N + \frac{130}{a} \right)\sqrt x + \label{vinogradova} \\
	& & C_2 \sqrt x + (1+a)^2 \cdot 64\left( 4x^{1/3} + \log N \log x \right) + \nonumber \\
	& & (1+a)^2 \left( \frac{1}{a^2} + \frac{63 \cdot 16}{\sqrt x} + \frac{64\Upsilon}{\sqrt{x^4-1}} \right) + \nonumber \\
	& & (1+a)^2 \left( (\log x)\left( \frac{32(a+2)}{2a+1}\log N + A_6 \right) + \frac{32}{2a+1}\log N + A_4 \right). \nonumber
\end{eqnarray}
A computer calculation shows that if $x \ge \left( 32\log N + 32a^2\log N + \frac{130}{a} + C_2 \right)^2$, then the last three lines of the above sum are at most $C_3 \sqrt x \log x$ for $C_3 = 56.958$. So, it suffices that our choice of $x$ satisfies
\begin{equation}\label{j}
\sqrt x > 32\log N + C_4\sqrt[3]{\log N} + C_3 \log x 
\end{equation}
where $C_4 \defeq 32C_0^2 + 130/C_0 \leq 314.042$. 
Choosing 
\begin{equation}
	x= \left( 32\log N + 315\sqrt[3]{\log N} + 533 \log \log N \right)^2,
	\label{eq:explicit_main}
\end{equation}
we obtain Theorem~\ref{thm:main} in its completely explicit form. The computation of the various constants used in the above proof of Theorem~\ref{thm:main} can be found in the ancillary program file (see Appendix~\ref{tabwars} regarding this file).
\section{Proof of Theorem~\ref{thm:roar}}\label{likith}

In this section, we prove Theorem~\ref{thm:roar}. Once again, the method we employ is based in part on the work of Bach and Sorenson (see~\cite{bach}).

Let $E$ be a non-CM elliptic curve over $\QQ$ with squarefree conductor $N_E$. Recall again that for every prime $p$, we can associate an angle $\theta_{p} \in [0, \pi]$ to the curve $E$, such that the trace of Frobenius at $p$ is expressible as $a_{E}(p) = 2\sqrt{p}\cos\theta_{p}$. Now, fix a subinterval $I = [\alpha, \beta] \subset [0,\pi]$, and denote by $\mu$ the Sato-Tate measure of $I$. Suppose that we have a function $f : \RR \to \RR$ such that $f(\theta) \leq 0$ when $\theta \in [0,\pi] \setminus I$. Then consider the sum $\SWAG$ defined as follows:
\begin{equation}\label{eq:feds2}
	\SWAG \defeq
	\sum_{\substack{p \leq x \\ p \nmid N_E}}\, (\log p) f(\theta_{p}) \cdot \left( p/x\right)^a \log\left(x/p\right),
\end{equation}
where $a \in (0,1/4]$ is a fixed constant. If the sum $\SWAG$ is strictly positive, then there must be a prime $p \le x$ such that $\theta_p \in I$. Thus, to prove Theorem~\ref{thm:roar}, we will pick $x$ in such a way that $\SWAG > 0$.

\subsection{Choosing the Function $f$}\label{longlistofexlovers}
We must first define a suitable choice of $f$. Fix a positive integer $M \ge 8$, and consider the Beurling-Selberg minorant $S : \RR \to \RR$ defined by
\begin{eqnarray}
S(\theta) & = & \frac{\beta - \alpha}{2\pi} -
\sum_{k = -M}^M \left(1-\frac{|k|}{M+1}\right)\frac{e^{i k(\beta-\theta)} + e^{i k(\theta - \alpha)}}{2(M+1)} - \label{bs}\\
& & \sum_{k = 1}^M g\left(\frac{k}{M+1}\right) \frac{e^{ik(\beta -\theta)} - e^{- ik (\beta - \theta)} + e^{ik(\theta -\alpha)} - e^{-ik (\theta - \alpha)}}{2i(M+1)}, \nonumber
\end{eqnarray}
where we define the function $g$ by \[ g(u) = -(1-u)\cot \pi u - 1/\pi. \]
As described in Section 1.2 of~\cite{montgomery}, the minorant $S$ is periodic modulo $2\pi$, and it satisfies the following important properties:
\begin{enumerate}
	\item For all $\theta \in [0,2\pi]$, we have that $S(\theta) \leq \chi_I(\theta)$, where $\chi_I$ denotes the indicator function of $I$. In particular, we have that $S(\theta) \leq 0$ for $\theta \not\in I$.
	\item If $\hat{S}(n)$ denotes the $n^\mathrm{th}$ Fourier series coefficient of $S$, then we have that $\hat{S}(n) + \hat{S}(-n) = 2\Re \hat{S}(n)$ for all $n$.
    \item $S$ can be expressed as follows:
		\[ S(\theta) = \frac{\beta - \alpha}{2\pi} - B\left(\frac{\beta - \theta}{2\pi}\right) - B\left(\frac{\theta - \alpha}{2\pi}\right), \]
    where $B(x)$ is Beurling's polynomial, defined by
	\[ B(x) \defeq V(x) + \frac{1}{2(M+1)}\Delta(x), \]
	where $V(x)$ is Vaaler's polynomial and $\Delta(x)$ is Fejer's kernel; these functions are defined by
	\begin{align*}
		V(x) &\defeq \frac{1}{M+1}\sum_{k = 1}^M g\left(\frac{k}{M+1}\right) \sin 2 \pi k x \\
		\Delta(x) &\defeq \sum_{k = -M}^{M} \left(1 - \frac{|k|}{M}\right)e^{2\pi i k x}.
	\end{align*}
	Vaaler's polynomial is a good approximation to the sawtooth function $s(x)$ whose graph on the period $[0,1]$ is given by $s(x) = x - 1/2$; in particular, $|V(x)| \leq |s(x)| \leq 1/2$, a result known as Vaaler's Lemma. One can use this bound on $V(x)$ to check that $|S(x)| \leq 5/2$.
\end{enumerate}
 Now, define $f$ by $f(\theta) \defeq S(\theta) + S(-\theta)$. From property (1), we readily deduce that $f(\theta) \leq 0$ when $\theta \in [0,\pi] - I$, as desired. Moreover, from~\eqref{bs}, one can deduce that $f$ is given in terms of the Chebyshev polynomials of the second kind by the formula
$$f(\theta) = \sum_{k = 0}^M \Xi_k U_k(\cos \theta),$$
where the coefficients $\Xi_k$ can be computed using property (2). Doing so, we find that if $e_{k,\ell} = 1$ when $k = \ell$ and $0$ otherwise, then
\begin{align*}
	\Xi_k & \defeq \frac{\beta - \alpha}{\pi}e_{k,0} - \left(1 - \frac{k}{M+1}\right)\frac{\cos k\alpha + \cos k \beta}{M+1} - g\left(\frac{k}{M+1}\right)\frac{\sin k \beta  - \sin k \alpha}{M+1} + \\
	& \left(1 - \frac{k+2}{M+1}\right)\frac{\cos (k+2)\alpha + \cos (k+2) \beta}{M+1} + g\left(\frac{k+2}{M+1}\right)\frac{\sin (k+2) \beta  - \sin (k+2) \alpha}{M+1},
\end{align*}
with the terms on the second line of the above equality being omitted when $k = M-1$ or $k = M$.
From part (2) of Lemma 3.2 of~\cite{rouse}, we have that
\begin{equation}\label{pucked}
\left\lvert \Xi_0 - \mu \right\rvert 
=
\left\lvert -\frac{2}{M+1} + \frac{M-1}{M+1} \frac{\cos2\alpha+\cos2\beta}{M+1} \right\rvert
\le \frac{4}{M+1}.
\end{equation}
As for $k \ge 1$, observe that we have the following estimate:
\begin{equation}\label{gangnamstyle}
	|\Xi_k| \le \frac{2}{M+1}
	\left( 2 - \frac{2k+2}{M+1}
	+ \frac{e_{k,M}}{M+1}
	+ \left\lvert g\left( \frac{k}{M+1} \right) \right\rvert
	+ \left\lvert g\left( \frac{k+2}{M+1} \right) \right\rvert \right),
\end{equation}
which is valid even when $k = M-1$ and $k = M$ (but not when $k=0$), as long as we take $g(u) = 0$ for $u \ge 1$.

\begin{remark}
In the case when our interval $I \subset [0,\pi]$ has either $0$ or $\pi$ as an endpoint, the minorant $f(\theta)$ can be taken to be the Beurling-Selberg minorant of the connected interval $I \cup I'$, where $I'$ is the reflection of $I$ across the aforementioned endpoint. This choice of $f$ (as opposed to taking $f(\theta) = S(\theta) + S(-\theta)$) would decrease the coefficient $C$ of the main term in our final bound.
\end{remark}

\subsection{Bounds on Sums of $\Xi_k$}
We now make the following computations, which are critical to our proof of Theorem~\ref{thm:roar}:
\begin{lemma}\label{callmemaybe}
	Assume $M \ge 8$.
	We have that
	\begin{align}
		\sum_{k = 0}^M |\Xi_k| &\le \frac2\pi\log M + \frac{21}{5}, \label{eq:seenoevil} \\
		\sum_{k = 0}^M k|\Xi_k| &\le \frac{CM}{16} \label{eq:hearnoevil}, \text{ and} \\
		\sum_{k = 0}^M (k+1)|\Xi_k| &\le \frac{CM}{16}  + \frac2\pi \log M + \pi, \label{eq:speaknoevil}
	\end{align}
	where we take $C = 32(1/3+1/\pi).$
\end{lemma}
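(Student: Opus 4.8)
The plan is to split off the $k=0$ term and then sum the remaining coefficients using the explicit formula for $\Xi_k$. For $k=0$ we simply quote \eqref{pucked}: since the Sato--Tate measure of $[0,\pi]$ equals $1$ we have $\mu\le 1$, so $|\Xi_0|\le \mu+\tfrac{4}{M+1}\le 1+\tfrac{4}{M+1}$, which for $M\ge 8$ contributes a bounded amount to \eqref{eq:seenoevil} and \eqref{eq:speaknoevil} and nothing to \eqref{eq:hearnoevil}. For $1\le k\le M$ the defining formula exhibits $\Xi_k$ as the lag-$2$ difference
\[
	\Xi_k=\varphi(k+2)-\varphi(k),\qquad
	\varphi(m)=\Bigl(1-\tfrac{m}{M+1}\Bigr)\frac{\cos m\alpha+\cos m\beta}{M+1}+g\Bigl(\tfrac{m}{M+1}\Bigr)\frac{\sin m\beta-\sin m\alpha}{M+1},
\]
where $g(u)=0$ for $u\ge 1$ and the $e_{k,M}$-term of \eqref{gangnamstyle} accounts for $k\in\{M-1,M\}$. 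Writing $\varphi=\varphi_{\cos}+\varphi_{g}$ for the two summands and using the triangle inequality reduces each of \eqref{eq:seenoevil}--\eqref{eq:speaknoevil} to weighted $\ell^1$-sums of $|\varphi_{\cos}(m)|$ and $|\varphi_{g}(m)|$ over $1\le m\le M+1$.

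The cosine part is harmless: $|\varphi_{\cos}(m)|\le\tfrac{2}{M+1}\bigl(1-\tfrac{m}{M+1}\bigr)$, so it contributes $O(1)$ to \eqref{eq:seenoevil} and \eqref{eq:speaknoevil} and at most $\tfrac{2M}{3}+O(1)$ to \eqref{eq:hearnoevil} (and less, using the averaging below). The Beurling-$g$ part carries the main terms, and the one delicate point is extracting the \emph{sharp} constants. The pointwise input is $0\le|g(u)|=(1-u)\cot\pi u+\tfrac1\pi\le\tfrac{1}{\pi u}$ on $(0,1)$, which follows from $\cot x<\tfrac1x$ on $(0,\pi)$ and gives $|\varphi_{g}(m)|\le\tfrac{|\sin m\beta-\sin m\alpha|}{\pi m}$. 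If one now estimated $|\sin m\beta-\sin m\alpha|$ by $2$ — which is all that \eqref{gangnamstyle} uses — the logarithmic main terms of \eqref{eq:seenoevil}, \eqref{eq:speaknoevil} and the linear one of \eqref{eq:hearnoevil} would come out a factor of roughly two too large. The fix is to exploit that the oscillating factor $\sin m\beta-\sin m\alpha$ has small Cesàro average: by Parseval/Bessel for the Fourier expansion of the indicator of $I$ the $L^2$-average $\tfrac1n\sum_{m\le n}(\sin m\beta-\sin m\alpha)^2$ is bounded by $1$, hence so is the $L^1$-average (Cauchy--Schwarz), and therefore, via Abel summation (together with the elementary Lipschitz bound $|\sin m\beta-\sin m\alpha|\le m(\beta-\alpha)$ to control the short initial range), $\sum_{m=1}^{M}\tfrac{|\sin m\beta-\sin m\alpha|}{m}\le\log M+O(1)$ and $\sum_{m=1}^{M}|\sin m\beta-\sin m\alpha|\le M+O(1)$; the analogous bounds hold for $\cos m\alpha+\cos m\beta$. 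Feeding these back gives $\sum_k|\Xi_k|\le\tfrac2\pi\log M+O(1)$, $\sum_k k|\Xi_k|\le\tfrac{2M}{3}+\tfrac{2M}{\pi}+O(\log M)=\tfrac{CM}{16}+O(\log M)$, and $\sum_k(k+1)|\Xi_k|\le\tfrac{CM}{16}+\tfrac2\pi\log M+O(1)$ with $C=32(\tfrac13+\tfrac1\pi)$.

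What is left — and what accounts for almost all of the work — is to replace every $O(1)$ and $O(\log M)$ above by the completely explicit constants $\tfrac{21}{5}$, $C=32(\tfrac13+\tfrac1\pi)$, and $\pi$. This means carrying the precise error terms through the $\cot$ estimate, the Parseval bound, the Abel summations and the cosine estimates, and using $M\ge 8$ to absorb the $\tfrac1{M+1}$-sized remainders; for \eqref{eq:hearnoevil} one additionally uses the averaging bound for $\varphi_{\cos}$, so that its contribution is comfortably below $\tfrac{2M}{3}$ and leaves room for the lower-order terms. The main obstacle throughout is exactly this sharpness: because the naive bound \eqref{gangnamstyle} is a factor of two too weak for the three displayed estimates, the lag-$2$-difference structure of $\Xi_k$ and the averaging (Parseval/Cauchy--Schwarz) bounds on $\cos m\alpha+\cos m\beta$ and $\sin m\beta-\sin m\alpha$ are genuinely indispensable.
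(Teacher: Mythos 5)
Your proposal has two genuine problems, and both stem from the same misdiagnosis.

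First, your claim that ``the naive bound \eqref{gangnamstyle} is a factor of two too weak'' is not correct: the paper's proof \emph{does} use \eqref{gangnamstyle} as its only input, with the trivial bounds $|\cos k\alpha+\cos k\beta|\le 2$ and $|\sin k\beta - \sin k\alpha|\le 2$. The factor-of-two loss you identify is introduced by \emph{you}, not by \eqref{gangnamstyle}, at the moment you replace $|g(u)| = (1-u)\cot\pi u + 1/\pi$ with its pointwise upper bound $1/(\pi u)$. That crude bound discards the two structural features of $g$ that the paper exploits: (a) the function $h(u)\defeq u(1-u)\cot\pi u$ is \emph{odd about $u=1/2$}, so $\sum_{k=1}^M h\bigl(\tfrac{k}{M+1}\bigr)=0$ exactly, which is precisely what gives the clean bound $\frac{2}{M+1}\sum_{k=1}^M k\,|g(\tfrac{k}{M+1})| = \frac{M}{\pi}$ in \eqref{eq:hearnoevil}; and (b) the factor $(1-u)\cot\pi u$ is \emph{negative} for $u>1/2$, so in the unweighted sum only the $k< (M+1)/2$ terms contribute positively, and that is exactly where the saving of one factor of $2$ — turning $\frac{4}{\pi}\log M$ into $\frac{2}{\pi}\log(M/2)$ — comes from in \eqref{eq:seenoevil}. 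In short, the paper's route is to average over the $g$ variable, where the cancellation is explicit and elementary, not over the trigonometric variable.

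Second, the Parseval/Ces\`aro step you substitute is not justified and is unlikely to hold in the uniform form you need. You assert $\frac1n\sum_{m\le n}(\sin m\beta-\sin m\alpha)^2 \le 1$, but a direct expansion gives $\sum_{m\le n}(\sin m\beta-\sin m\alpha)^2 = n - \tfrac12\sum\cos 2m\beta - \tfrac12\sum\cos 2m\alpha - \sum\cos m(\beta-\alpha) + \sum\cos m(\beta+\alpha)$, and the cosine sums are only $O(1/|\sin(\theta/2)|)$; they are \emph{not} uniformly bounded in $\alpha,\beta,n$, and in particular nothing forces the combination of remainder terms to be nonpositive. Moreover, $\hat\chi_I(m)$ is proportional to $\sin\bigl(m(\beta-\alpha)/2\bigr)/m$, not to $\sin m\beta-\sin m\alpha$, so Parseval does not directly control the quantity you write. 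Even granting all of this, your argument would still need a separate treatment of the $k$-weighted sum for \eqref{eq:hearnoevil}, where the paper's antisymmetry gives an exact identity. Replacing that identity with an averaged inequality would add yet another lossy step to the very estimate whose constant $C=32(1/3+1/\pi)$ is claimed to be tight. The concrete repair is to keep $|g(u)|$ in the exact form $(1-u)\cot\pi u + 1/\pi$, split into the easy $1/\pi$ part and the cotangent part, and then use (a) for the weighted sum and (b) for the unweighted one; with those in hand the three stated bounds drop out of \eqref{gangnamstyle} directly, with the trivial bound of $2$ on the trigonometric factors being perfectly adequate.
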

\begin{proof}
	We will begin by doing the summation for $k \ge 1$, and we will treat the $k=0$ term separately.
	A straightforward computation yields that the first three terms of~\eqref{gangnamstyle} sum to the following:
	\begin{align*}
		\sum_{k = 1}^M \left( 2 - \frac{2k+2}{M+1} + \frac{e_{k,M}}{M+1} \right)
		&= \frac{M^2-M+1}{M+1}, \text{ and} \\
		\sum_{k = 1}^M k \left( 2 - \frac{2k+2}{M+1} + \frac{e_{k,M}}{M+1} \right)
		&= \frac{M^2-M}{3} + \frac{M}{M+1}.
	\end{align*}
	The sum of these two terms is given by $$\sum_{k = 1}^M (k+1) \left( 2 - \frac{2k+2}{M+1} + \frac{e_{k,M}}{M+1} \right) = \frac{M^2+2M-3}{3} + \frac{2}{M+1}.$$
As for summing the two terms of~\eqref{gangnamstyle} involving $g$, we first observe that
	\[
		 \frac{2}{M+1} \sum_{k=1}^{M} \left\lvert g\left( \frac{k+2}{M+1} \right) \right\rvert
		 \leq \frac{2}{M+1} \sum_{k=1}^{M} \left\lvert g\left( \frac{k}{M+1} \right) \right\rvert
	\]
	as $|g(u)|$ is monotonically decreasing on the interval $(0,1)$.
	Similar inequalities are true with the weights of $k$ and $k+1$ added in.
	Moreover, $g(u) \le 0$ for all $u \in (0,1)$, so we may write
	\[
		\sum_{k=1}^{M} \left\lvert g\left( \frac{k}{M+1} \right) \right\rvert
		=
		\sum_{k=1}^{M}
		\left(\left(1 - \frac{k}{M+1}\right)\cot\left(\frac{\pi k}{M+1}\right) + \frac{1}{\pi}\right)
	\]
	and again, the analogous equalities hold with coefficients of $k$ and $k+1$.
	Since the $\frac1\pi$ terms are easy to sum, the main content is in estimating the sum of cotangents.
	First, notice that
	\[  \sum_{k=1}^{M} \frac{k}{M+1}\left(1 - \frac{k}{M+1}\right)\cot\left(\frac{\pi k}{M+1}\right) = 0, \]
	which follows readily from the fact that the function $u(1-u)\cot(\pi u)$
	is antisymmetric on the interval $(0,1)$ about the point $u = 1/2$.
	Also, we note that for $u < 1/2$, we
	have the inequality $u(1-u)\cot(\pi u) < 1/\pi$ and that
	for $u \ge 1/2$ we have $u(1-u)\cot(\pi u) < 0$.
	It then follows from the bound on the harmonic numbers given by~\cite{lilytsai} that we have
	\[ 
		\sum_{k=1}^{M} \frac{1}{M+1}\left(1 - \frac{k}{M+1}\right)\cot\left(\frac{\pi k}{M+1}\right) 
		< \sum_{k=1}^{M/2} \frac{1}{\pi k}
		< \frac1\pi \left( \log(M/2) + \gamma + \frac{1}{M+\frac13} \right).
	\]
We then conclude that we have the following two inequalities:
	\[
		\frac{2}{M+1} \sum_{k=1}^{M} \left\lvert g\left( \frac{k+2}{M+1} \right) \right\rvert
		\leq \frac{2}{M+1} \sum_{k=1}^{M} \left\lvert g\left( \frac{k}{M+1} \right) \right\rvert
		< \frac1\pi\left[\frac{2M}{M+1}+ \log(M/2) + \gamma + \frac{1}{M+\frac13}\right],
	\]
	\[
		\text{and} \quad \frac{2}{M+1} \sum_{k=1}^{M} k\left\lvert g\left( \frac{k+2}{M+1} \right) \right\rvert
		\leq \frac{2}{M+1} \sum_{k=1}^{M} k\left\lvert g\left( \frac{k}{M+1} \right) \right\rvert
		= \frac{M}{\pi}.
	\]
	We can now estimate the left-hand-side of \eqref{eq:hearnoevil}, since there is no $|\Xi_0|$ term. In this case, we obtain the following bound:
	\begin{equation}
		\frac{2}{M+1} \left( \frac{M^2-M}{3} + \frac{M}{M+1} \right) + \frac{2M}{\pi},
		\label{eq:TST}
	\end{equation}
	which is enough to imply \eqref{eq:hearnoevil} when $M \ge 8$. For $|\Xi_0|$ we use a naive estimate $|\Xi_0| \le 1 + \frac{4}{M+1}$ from~\eqref{pucked}.
	It follows that for~\eqref{eq:seenoevil}, we obtain the upper bound
	\begin{equation}
		\left( 1 + \frac{4}{M+1} \right)
		+  \frac{2(M^2-M+1)}{(M+1)^2} 
		+ \frac{2}{\pi}\left( \frac{2M}{M+1}+ \log(M/2) + \gamma + \frac{1}{M+\frac13} \right).
		\label{eq:IMO}
	\end{equation}
	One can check that~\eqref{eq:IMO} is bounded by $\frac{21}{5} + \frac2\pi \log M$ for $M \ge 8$. Finally, if we sum \eqref{eq:TST} and \eqref{eq:IMO}, we obtain \eqref{eq:speaknoevil}, again for $M \ge 8$.
	(The appearance of $\pi$ is purely coincidental in the right-hand side of \eqref{eq:speaknoevil};
	the optimal constant is approximately $3.14$.)
\end{proof}

\subsection{Summing over All Prime Powers}
Now that we have defined $f$, we return to our consideration of the sum $\SWAG$ defined in~\eqref{eq:feds2}. Consider the function $\Lt(n)$ defined as follows: if $n$ is not a prime power, then $\Lt(n) \defeq 0$, and if $n = p^m$, then $\Lt(n)$ is given as follows:
\begin{equation*}
	\Lt(n) \defeq (\log p)f(m\theta_{p}) = \sum_{k = 0}^M \Xi_k\Lambda_k(n),
\end{equation*}
where we recall from Section~\ref{defs} that $\Lambda_k(n)$ is the $n^\mathrm{th}$ coefficient of the Dirichlet series for the logarithmic derivative of $L(s, \Sym^k E)$. Now, writing $\SWAG$ in terms of $\Lt(n)$, we have that
\[
	\SWAG = \sum_{\substack{p \leq x \\ p \nmid N}}
	\Lt(p) \cdot \left( p/x\right)^a \log\left(x/p\right).
\]
As in Section~\ref{pp}, we want to approximate $\SWAG$ with an analogous sum over all prime powers up to $x$, for this will allow us to estimate $\SWAG$ using the analytic theory of symmetric-power $L$-functions. For this purpose, we introduce the sum $\SWAG'$, given by
\begin{equation}\label{eq:speedy1}
	 \SWAG' \defeq
	\sum_{n \leq x} \Lt(n) \cdot \left( n/x \right)^a \log\left(\frac{x}{n}\right).
\end{equation}
As discussed in Section~\ref{longlistofexlovers}, we have that $|S(\theta)| \leq 5/2$ for all $\theta \in \RR$, so we deduce that $|f(\theta)| \leq 5$ for all $\theta \in \RR$. Using the above bound, we can repeat the argument presented in Section~\ref{pp} to find that
\begin{equation}\label{starbuckslovers} 
	\SWAG' - \SWAG \leq 5\left(2.002\sqrt x + 4x^{1/3} + \log N_E \log x\right).
\end{equation}
In what follows, we will estimate $\SWAG'$.

\subsection{Estimating $\SWAG'$}
Expanding $\SWAG'$ in terms of the definition of $\Lt(n)$, we have that
\[ \SWAG' = \sum_{k=0}^M \Xi_k \sum_{n \le x} \Lambda_{k}(n) \cdot (n/x)^a \log (x/n).  \]
Let $\SWAG_{k}$  denote the inner sum in the above equality, weighted by the coefficient $\Xi_k$. We can repeat the arguments of Section~\ref{subsec:main_est} to express $\SWAG_k$ as an integral:
\begin{equation*}
\SWAG_k = \Xi_k \cdot \frac{1}{2 \pi i} \int_{2-i\infty}^{2+i\infty} \frac{x^s}{(s+a)^2} \left( -\frac{L'}{L}(s, \Sym^k E) \right) \; ds.
\end{equation*}
To estimate the above integral, we resort to the Residue Theorem as in Section~\ref{subsec:main_est}. By pushing our contour to $\sigma = -\infty$, we find that $\SWAG_k = R_k$, where $R_{k}$ is the sum of the integrand's residues in the half-plane defined by $\sigma < 2$. The remainder of this section is devoted to estimating $R_k$.

The main term, which is contributed by the pole at $s=1$ for $k=0$, is given as follows:
\begin{equation}\label{rirrian} |\Xi_0| \cdot \frac{x}{(1+a)^2}. \end{equation}
We next estimate the contribution due to residues due to all zeros of $L(s, \Sym^k E)$.

\subsubsection{Residues at All Zeros}\label{johnbarry}
By repeating the arguments used in Section~\ref{meanttofly},
we deduce that the contribution of residues from trivial zeros of $L(s, \Sym^k E)$ is bounded by
\[ \leq\frac{e_{k}}{a^2} + \frac{16(1-e_{k})(k+1)}{\sqrt{x}}  +  \frac{\Upsilon(k+1)}{\sqrt{x^4 - 1}}.\]
Using Lemma~\ref{callmemaybe} and \eqref{pucked} to sum over all $k$ with the weights $|\Xi_k|$, we obtain a total contribution from trivial zeros that is bounded by
\[
	\leq
	\frac{13}{9a^2}
	+ \left( \frac{CM}{16} + \frac{2}{\pi}\log M + \pi \right)\left( \frac{16}{\sqrt x} + \frac{\Upsilon}{\sqrt{x^4-1}}  \right).
\]
In analogy with the arguments of Section~\ref{subsec:critical}, the contribution of residues from the nontrivial zeros of $L(s, \Sym^k E)$ is bounded as follows for $M \geq 8$ and $k \le M$:
\begin{align*}
	&\le \sum_\rho \frac{\sqrt{x}}{|\rho+a|^2} \\
	&\le  \frac{2\sqrt{x}}{2a+1} \left(
		\half\log N_E^k + \frac {e_k}{a} + \frac{e_k}{a+1} + \left|\dgk(1+a)\right|
		+ (k+1) \left\lvert \frac{\zeta'}{\zeta}(1+a) \right\rvert \right) \\
	&\le \frac{2\sqrt{x}}{2a+1} \left( \frac{k}{2} \log N_E + \frac{e_k}{a} + \frac{e_k}{a+1}
	+ \big( \tfrac{k+1}{2}\left(\tau + \log\left( \tfrac{1+a}{2}+\tfrac{M+16}{8} \right)\right) + 1 \big)
	+ (k+1)\left( \tfrac1a+\gamma \right) \right),
\end{align*}
where in the last step above, we used Lemma~\ref{lem:bitpar} to estimate the digamma function. Using Lemma~\ref{callmemaybe} to sum the residues from nontrivial zeros over all $k$ with the weights $|\Xi_k|$, we obtain the following bound on the total contribution due to nontrivial zeros:
\begin{align}
	& \le \sum_{k=0}^M |\Xi_k| \sum_\rho \frac{\sqrt{x}}{|\rho+a|^2} \nonumber \\
	&\le \frac{\sqrt x}{2a+1} \left[
		\sum_{k=0}^M k|\Xi_k| \log N_E
		+ (k+1)|\Xi_k| \left(D_1 + \log \tfrac{M+21}{8}\right)
		+ 2|\Xi_k| + 2|\Xi_0|\left(\frac{e_k}{a} + \frac{e_k}{a+1}\right) \right] \nonumber \\
		&\le \frac{ \tfrac{CM}{16} \log(N_E \frac{M+21}{8})  + \frac{CM}{16}D_1
		+ \left( \tfrac2\pi\log M + \pi \right) \left( D_1 + \log \tfrac{M+21}{8} \right)
		+ D_2 + \tfrac{4}{\pi}\log M }{2a+1} \sqrt x,
		\nonumber
\end{align}
where $D_1 \defeq \tau + \frac2a + 2\gamma$, and
$D_2 \defeq 2 \cdot \frac{13}{9}\left(\frac1a + \frac{1}{a+1}\right) + \frac{42}{5}$.
For convenience, we denote the numerator of the fraction in the last expression above by $\Omega$;~i.e. we define
\begin{align*}
	\Omega &\defeq \frac{CM}{16} \log\left(N_E \frac{M+21}{8}\right)  + \frac{CM}{16}D_1+ \\
	&\hphantom{\defeq} \left( \frac2\pi\log M + \pi \right) \left( D_1 + \log \frac{M+21}{8} \right) + D_2 + \frac{4}{\pi}\log M.
\end{align*}
Thus the total contribution from both the trivial and nontrivial zeros is bounded by
\begin{equation}
	\frac{13}{9a^2}
	+ \left( \frac{CM}{16} + \frac{2}{\pi}\log M + \pi \right)\left( \frac{16}{\sqrt x} + \frac{\Upsilon}{\sqrt{x^4-1}}  \right)
	+ \frac{\Omega}{2a+1} \sqrt x
	\label{eq:omegaz}
\end{equation}

\subsubsection{Residue at $s = -a$}
We next estimate the residue due to the pole at $s = -a$. The contribution to this residue from $L(s, \Sym^k E)$ is given by
\[ \left(\log x \cdot \left( -\frac{L'}{L}(-a, \Sym^k E) \right) 
	+ \left( -\frac{L'}{L} \right)' (-a, \Sym^k E)\right)x^{-a}.  \]
The argument presented in this section is analogous to that of Section~\ref{breakfree}; we must simply repeat the proof of Lemma~\ref{lem:yomommastacos} using Lemma~\ref{lem:bitpar} instead of Lemma~\ref{lem:digamma}. The logarithmic derivative of $L(s, \Sym^k E)$ can be expressed in the following way:
\begin{eqnarray}
	-\frac{L'}{L}(s, \Sym^k E) &=& -\frac{L'}{L}(2, \Sym^k E) + \frac{\gamma_k'}{\gamma_k}(s)-\frac{\gamma_k'}{\gamma_k}(2)
	+ \label{eq:dLL2} \\ 
    & & e_{k} \left( \frac{1}{s} + \frac{1}{s-1} - \frac32 \right) + 
  \sum_{\rho} \left( \frac{1}{2-\rho}-\frac{1}{s-\rho} \right).  \nonumber
\end{eqnarray}
Applying Lemma~\ref{lem:bitpar} to estimate the first three terms of~\eqref{eq:dLL2}, we obtain the following bound on the first term of the residue:
\begin{eqnarray*}
	\sum_{k = 0}^M |\Xi_k|\cdot \left\lvert \left( -\frac{L'}{L} \right)(-a, \Sym^k E) \right\rvert & \le & (a+2)\sum_\rho\frac{1}{|\rho+a|^2} + |\Xi_0| \left( \frac1a+\frac1{1+a}+\frac32 \right)+ \\
	& & \sum_{k=0}^M (k+1)|\Xi_k| \left( \zzt + \tau + \log\left( \frac{M+24}{8} \right) \right)+ \\
	& & \sum_{k=0}^M |\Xi_k| \left[ \frac{\Delta}{a} + \frac{28}{5} \right].
\end{eqnarray*}
The first term on the right-hand-side above was already estimated in Section~\ref{johnbarry} and is bounded by $\frac{a+2}{2a+1}\Omega$. Since the bounds we use on $|\Xi_k|$ are decreasing for $k \ge 1$, the above expression can be bounded as follows:
\begin{eqnarray*}
	 \sum_{k = 0}^M |\Xi_k|\cdot \left\lvert \left( -\frac{L'}{L} \right)(-a, \Sym^k E) \right\rvert & \le & \frac{a+2}{2a+1} \Omega + |\Xi_0| \left( \frac1a+\frac1{1+a}+\frac32 + \frac34 \cdot \frac{1}{a} \right)+ \\
	& &  \sum_{k=0}^M (k+1)|\Xi_k| \left( \zzt + \tau + \log\left( \frac{M+24}{8} \right) \right)+ \\
	& & \sum_{k=0}^M |\Xi_k| \left[ \frac{1}{4} \cdot \frac{1}{a} + \frac{28}{5} \right].
\end{eqnarray*}
For convenience, we make the following definitions:
$$D_3 \defeq \frac{7}{4a}+\frac1{1+a}+\frac32,\,\, D_4 \defeq \frac{1}{4a} + \frac{28}{5}, \text{ and }\eta_1 \defeq \zzt + \tau.$$
Now, by repeating the proof of Lemma~\ref{lem:difflogdiff}, we estimate the second term of the residue at $s = -a$ as follows:
\begin{eqnarray*}
	\sum_{k = 0}^M |\Xi_k| \cdot \left\lvert \left( -\frac{L'}{L} \right)'(-a, \Sym^k E) \right\rvert
	& \leq & |\Xi_0| \left( \frac{1}{a^2}+\frac{1}{(1+a)^2} \right) + \frac{\Omega}{2a+1} + \\
	& & \sum_{k=0}^M (k+1)(|\Xi_k|) \cdot \frac{1}{4} \left( \frac{4}{a^2}+\frac{4e_{k}}{(2-a)^2}
	+ \left(\dGam\right)'(1) \right).
 \end{eqnarray*}
Again for convenience, we define 
$$D_5 \defeq \frac{13}{9} \left( a^{-2} + (a+1)^{-2} + (2-a)^{-2} \right) \text{ and } D_6 \defeq a^{-2} + \frac{1}{4}\left( \dGam \right)'(1).$$
Thus, we conclude that the total contribution due to the residue at $s = -a$ (dropping the denominator of $x^a$) is bounded by the following:
\begin{eqnarray*}
	&\le& 	(\log x)\left(
		\frac{a+2}{2a+1}\Omega
		+ \frac{13}{9}D_3
		+ \left( \frac2\pi\log M + \frac{21}{5} \right)D_4 \right) + \\
	& & 	(\log x) \left( \frac{CM}{16}+\frac2\pi\log M + \pi \right)\left(\eta_1 + \log\frac{M+24}{8}\right) + \\
	& & D_5 + \frac{\Omega}{2a+1} + \left( \frac{CM}{16}+\frac2\pi\log M + \pi \right)D_6.
\end{eqnarray*}

\subsection{The Final Estimate}
We now compare the main term with the sum of the residues due to all zeros and the residue at $s = -a$, along with the error due to transitioning to a sum over prime powers.
By collating the main term \eqref{rirrian} with \eqref{eq:omegaz}, \eqref{starbuckslovers}, and the computation in the previous section,
we deduce that it suffices to seek a value of $x$ such that
\begin{eqnarray*}
	\frac{|\Xi_0| x}{(1+a)^2}
	&>& \frac{\Omega}{2a+1} \sqrt x +  \frac{13}{9a^2} + \left( \frac{CM}{16} + \frac2\pi\log M + \pi \right)
		\left( \frac{16}{\sqrt x} + \frac{\Upsilon}{\sqrt{x^4-1}} \right) +\\
	& & 	(\log x)\left(
		\frac{2+a}{2a+1}\Omega
		+ \frac{13}{9}D_3
		+ \left( \frac2\pi\log M + \frac{21}{5} \right)D_4 \right) + \\
	& & (\log x) \left( \frac{CM}{16}+\frac2\pi\log M + \pi \right)\left(\eta_1 + \log\frac{M+24}{8}\right) + \\
	& & D_5 + \frac{1}{2a+1} \Omega + \left( \frac{CM}{16}+\frac2\pi\log M + \pi \right)D_6+ \\
	& & 5\left(2.002\sqrt x + 4x^{1/3} + \log N_E \log x\right).
\end{eqnarray*}
Denote the sum of all terms after the first term in the above equation by $\Phi$.
We now select
\[ M = \left\lceil \frac8\mu \right\rceil \ge 8. \]
Since $|\Xi_0| \ge \mu - \frac{4}{M+1} \ge \mu/2$,
it suffices that our choice of $x$ satisfies
\[ \mu x > 2(1+a^2)\Omega\sqrt x + 2(1+a)^2 \Phi. \]
We then make the following choices:
\[
	a = \frac14 \sqrt[3]{\frac{\log(319/8)}{\Theta}},
	\quad\text{where}\quad
	\Theta = \log \left( N_E \cdot \frac{M+21}{8} \right).
\]
We can write $a$ as $a = B_0 \Theta^{-1/3}$, where $B_0 = \frac14\sqrt[3]{\log(319/8)}$. 
(We have $a \le \frac14$ simply because $N_E \ge 11$, $M \ge 8$.)
If our choice of $x$ satisfies $\mu x > 2(1+a^2)\Omega\sqrt x + 2(1+a)^2 \Phi$,
then $x$ also necessarily satisfies 
\[ \sqrt x 
	> \frac{2(1+a)^2 \Omega }{\mu}
	> \frac{2(1+a)^2 \left( \frac{CM}{16} \Theta \right)}{\mu}
	\ge \frac{(1+a^2)C\Theta}{\mu^2}. \]
A computer calculation implies that the above inequality yields
$2(1+a)^2\Phi < B_1\sqrt x$, where $B_1 \defeq 276.572$.
Thus, in order to show that $W > 0$, it suffices to choose $x$ so that
\[ \mu x > \left( 2(1+a^2)\Omega + B_1 \right)\sqrt x. \]
If we expand $2(1+a^2)\Omega$, we find that
\begin{eqnarray*}
	2(1+a^2)\Omega
	&=& \frac{CM}{8} \left( \Theta + \Theta a^2 + \frac2a \right)
	+ \frac{CM}{8}\left( 2a + (1+a^2)(\tau+2\gamma) \right) +\\
	& & 2(1+a^2)\left( 
	\left( \frac2\pi\log M + \pi \right) \left( D_1 + \log \frac{M+21}{8} \right)
	+ D_2 + \frac{4}{\pi}\log M \right).
\end{eqnarray*}
The terms on the second line above are evidently $\ll \Theta^{1/3} (\log M)^{2/3}$
and in fact, a computer calculation shows that
\[ 2(1+a^2)\Omega < \frac{CM}{8} \left( \Theta + \Theta a^2 + \frac2a + B_2 \right)
	+ B_3 \Theta^{1/3} (\log M)^{2/3}  \]
where $B_2 \defeq 2 \cdot \frac14 + (1+(\frac14)^2)(\tau+2\gamma) = 6.225$ and $B_3 \defeq 105.007$.
Recalling our choice of $M$, we see that it suffices to choose $x$ satisfying
\[
	\mu \sqrt x >
	\frac{C}{\mu} \left( \Theta + \Theta a^2 + \frac{2}{a} + B_2 \right)
	+ B_3 \Theta^{1/3} (\log M)^{2/3} + B_1.
\]
Since we took $a = B_0 \Theta^{-1/3}$, letting $B_4 \defeq C(B_0^2 + 2/B_0) = 111.106$
yields that
\[
	\mu \sqrt x >
	\frac{1}{\mu} \left( C\Theta + B_4\Theta^{1/3} + B_2  \right)
	+ B_3 \Theta^{1/3} (\log M)^{2/3} + B_1.
\]
Thus, it suffices to have $x$ satisfy
\[ x > \left( \frac{C\Theta + \left(B_4 + B_3\mu(\log 8/\mu)^{2/3}\right)\Theta^{1/3} + B_2 + B_1\mu}{\mu^2}  \right)^2. \]
If we let $B_5 \defeq B_4 + B_3(\log 8)^{2/3} + B_2/(\log 319/8)^{1/3} = 286.606$, 
we obtain Theorem~\ref{thm:roar} in the completely explicit form
\begin{equation}
	p < \left( \frac{ C\Theta + 287\sqrt[3]{\Theta} + 277\mu}{\mu^2} \right)^2.
	\label{eq:explicit_roar}
\end{equation}
The computation of the various constants used in the above proof of Theorem~\ref{thm:roar} can be found in the ancillary program file (see Appendix~\ref{tabwars} regarding this file).

\appendix

\section{Notes on the Optimality of the Estimates}\label{rawrconstants}
In this section, we discuss how the constants that appear in the explicit statements (see~\eqref{eq:explicit_main} and~\eqref{eq:explicit_roar}) of Theorems~\ref{thm:main} and~\ref{thm:roar} arise; in particular, we explain why these constants cannot be easily improved without utilizing different methods.

Given a Galois extension $L/K$ of a number field $K$, it is shown in~\cite{bach} that the least norm of a prime $\pp \in  \OO_K$ is bounded by $\left( (1+o(1)) \log d_L \right)^2$, where $d_L$ is the absolute discriminant of $L$. Given that our techniques are modeled after those employed in~\cite{bach}, it is reasonable to expect a bound of $\left( (1+o(1)) \log N \right)^2$ for Theorem~\ref{thm:main}, rather
than $\left( (32+o(1))\log N \right)^2$.
The coefficient of ``$32$'' in the main term of Theorem~\ref{thm:main} arises from estimating the contribution due to the sum~\eqref{eq:blame} of residues from nontrivial zeros of $L(s, \PSY{i}{j})$; specifically, the ``$32$'' appears upon computing the following sum:
\begin{equation}
	\frac{1}{|c_0|^2}\sum_{0 \leq i,j \leq 2} |c_{ij}| \cdot i(j+1) = 32.
	\label{eq:opt32}
\end{equation}
Here, the coefficients $c_{ij}$ are defined in Section~\ref{coldasice} by $c_{ij} = c_ic_j$, where $f_k(t) \defeq c_0U_0(t) + (-1)^{k+1}c_1U_1(t) + c_2U_2(t)$ is the minorant that we are using in order to detect sign chances in $\cos \theta_{k,p}$ for each $k \in \{1,2\}$. It is clear that the coefficient of the main term will decrease if $c_0$ is large relative to the numerator of~\eqref{eq:opt32}, but for degree-$2$ polynomials, the choice $(c_0,c_1,c_2) = (1,2,1)$ is optimal in this sense. Moreover, our choice of $f_1,f_2$ is optimal in the sense that the constant terms of $f_1, f_2$, when written in the basis of Chebyshev polynomials of the second kind, are as close to the Sato-Tate measures of the indicator functions of the intervals $[0, \pi/2]$ and $[\pi/2, \pi]$. 
Perhaps counter-intuitively, using a higher-degree polynomial appears to make the coefficient of the main term worse,
since the number of coefficients $c_i$ increases, causing the numerator of~\eqref{eq:opt32} to grow large.

Similarly, the leading term in Theorem~\ref{thm:roar} arises when we compare the value of $|\Xi_0|$ with the estimate
\[ \sum_{k=0}^M k|\Xi_k| = \left( \frac23+\frac2\pi + o(1) \right)M \]
In this case, to ensure that we have $\Xi_0 \ge 0$, we need $M \gg \frac{1}{\mu}$; here, the choice $M = \lfloor 8/\mu \rfloor$ is optimal, and this choice yields that
$|\Xi_0| \ge \half\mu$. It follows that the coefficient of the main term is given by $C = 32(1/3 + 1/\pi)$. In summary, the Fourier-analytic techniques that we use (specifically, the Beurling-Selberg minorant $S$ of the indicator function of an interval)
are entirely responsible for the surprisingly large leading coefficient. In contrast to the situation of Theorem~\ref{thm:main},
where the coefficients of the polynomials $f_1, f_2$ are known explicitly, the situation is worsened in Theorem~\ref{thm:roar}: since the Fourier coefficients of the Beurling-Selberg minorant cannot be expressed in an easy-to-use form, we must estimate them instead of using explicit formulas.

The constants in the error terms as presented in~\eqref{eq:explicit_main} and~\eqref{eq:explicit_roar}, all of which are on the order of $100$, ultimately arise in the same way from our choice of minorant. For example, in Theorem~\ref{thm:main}, the term of order $\sqrt[3]{\log N}$ appears
when we select $a \asymp (\log N)^{-1/3}$ in the largest error term on the right-hand-side of~\eqref{vinogradova}, which is given by
\[ 32a^2 \log N + \frac{130}{a}. \]
The large coefficient of $130$, which arises from the factor of $128$ that appears in~\eqref{eq:blame}, is what causes the constant $C_4$ to be as large as it is. Even the transition to summing over prime powers, which is normally a benign step,
introduces a sizable contribution to the error term, since we are compelled to use the bound $|c_0|^{-2} \cdot |f_1(t)| \cdot |f_2(t)| \le 64$,
which leads to the large error term $64 \cdot 2.002 \sqrt x$ (on the right-hand-side of~\eqref{vinogradova}, this term is combined with other terms to yield the error term $C_2\sqrt{x}$).

\section{Tabulation of Variables and Constants}\label{tabwars}
For convenience, we now list the definitions of
all variables and constants used throughout the rest of this paper.
Each variable listed below is also defined when introduced in the body of the paper. Note that $a$ is a positive real number in the interval $(0, 1/4]$;
we will select it when appropriate, at the end of the proofs of Theorems~\ref{thm:main} and~\ref{thm:roar}. Computations of some of the constants listed below and other calculations that are essential to the paper are included in an ancillary program file, which will be provided to the reader upon request.

\subsection{Notation in the Proof of Theorem~\ref{thm:main}}
For the proof of Theorem~\ref{thm:main}, we define the following constants:
\begin{align*}
	\tau &= \frac{11}{3} + \log\pi \approx 4.811 \\
	\eta &= \log(12) + 2\tau + \zzt \approx 12.678 \\
	\Upsilon &= 0.337 \\
	\Psi &= 426.875 \\
	\intertext{The following variables depend on $a$:}
	A_1 &= \half(\tau + \log(a/2+7/2)) + \frac1a + \gamma \\
	A_2 &= \frac{2}{2a+1}\left( 64A_1 + \frac{2a+1}{a(a+1)} \right) \\
	A_3 &= 64\cdot2.002 + A_2 \\
	A_4 &= \frac{\Psi}{4} + \frac{2}{a^2} + \frac{1}{(1+a)^2} + A_2 \\
	A_5 &= \frac{1}{a + 1} + \frac{1}{a} + \frac{3}{2} \\
	A_6 &= (a+2)A_1 + 64\eta + 96 + \frac1a + A_5
	\intertext{The following constants are absolute:}
	C_0 &= \frac{\sqrt[3]{\log 121}}{4} \approx 0.422 \\
	C_1 &= \half(\tau+\log(29/8)) + \gamma \approx 3.627 \\
	C_2 &= 64\cdot2.002\cdot\frac{25}{16} + 2 + 128\cdot\frac{25}{24}C_1 + 64 \approx 749.779 \\
	C_3 &\approx 56.958 \\
	C_4 &= 32C_0^2 + \frac{130}{C_0} \approx 314.042 \\
\end{align*}
\subsection{Notation in the Proof of Theorem~\ref{thm:roar}}
We retain the constants $\tau$, $\eta$, and $\Upsilon$.
We define the absolute constant
\begin{align*}
	C &= \frac{32}{3} + \frac{32}{\pi} \approx 20.853 \\
	\intertext{The following variables depend on $a$:}
	D_1 &= \tau + \frac2a + 2\gamma \\
	D_2 &= \frac2a + \frac{2}{a+1} + \frac{21}{5} \\
	D_3 &= \frac{7}{4a} + \frac{1}{1+a} + \frac32 \\
	D_4 &= \frac{1}{4a} + \frac{28}{5} \\
	D_5 &= \frac{13}{9} \left( a^{-2} + (1+a)^{-2} \right) \\
	D_6 &= \half \left( 4a^{-2} + 4(2-a)^{-2} + \left( \dGam \right)'(1) \right).
	\intertext{The following constants are absolute:}
	B_0 &= \frac{\sqrt[3]{\log(319/8)}}{4} \approx 0.386 \\
	B_1 &= 276.572 \\
	B_2 &= \frac{17}{16}(\tau+2\gamma) + \half \approx 6.839 \\
	B_3 &= 105.007 \\
	B_4 &= C(B_0^2+\tfrac{2}{B_0}) \approx 111.106 \\
	B_5 &= (\log 8)^{\frac23}B_3 + B_4 + \frac{B_2}{\log(330/8)^{1/3}} \approx 286.606
\end{align*}


\section*{Acknowledgments}
\noindent This research was supervised by Ken Ono at the Emory University Mathematics REU and was supported by the National Science Foundation (grant number DMS-1250467). We would like to thank Ken Ono and Jesse Thorner for offering their advice and guidance and for providing many helpful conversations and valuable suggestions on the paper. We would also like to acknowledge Hugh L.~Montgomery for helpful discussions on Vaaler's Lemma. We would finally like to thank the anonymous referee for providing beneficial comments on earlier versions of the paper. The authors used SageMath 6.7 and \emph{Mathematica} 10.0 for explicit calculations.

\normalsize
\bibliographystyle{amsxport}
\bibliography{biblio2}

\end{document}